\documentclass[12pt]{amsart}

\usepackage[english]{babel}
\usepackage{mathrsfs,amssymb}
\usepackage{mathtools}
\usepackage[colorlinks, citecolor = blue]{hyperref}

\usepackage[shortlabels]{enumitem}
\setlist[itemize]{leftmargin=25pt}
\setlist[enumerate]{leftmargin=25pt}

\newtheorem{theorem}{Theorem}[section]
\newtheorem{lemma}[theorem]{Lemma}
\newtheorem{prop}[theorem]{Proposition}

\theoremstyle{definition}
\newtheorem{definition}[theorem]{Definition}

\theoremstyle{remark}

\numberwithin{equation}{section}
\usepackage[colorinlistoftodos,prependcaption,textsize=small]{todonotes}

\DeclareMathOperator*{\esssup}{ess\,sup}
\DeclareMathOperator*{\essinf}{ess\,inf}

\let \la=\lambda
\let \e=\varepsilon
\let \d=\delta

\let \a=\alpha
\let \f=\varphi

\let \O=\Omega
\let \si=\sigma

\let \ga=\gamma

\allowdisplaybreaks

\begin{document}
\title[The maximal operator on variable Lebesgue spaces]
{The maximal operator on variable Lebesgue spaces: an ${\mathcal A}_{\infty}$-characterization}

\author[A.K. Lerner]{Andrei K. Lerner}
\address[A.K. Lerner]{Department of Mathematics,
Bar-Ilan University, 5290002 Ramat Gan, Israel}
\email{lernera@math.biu.ac.il}

\thanks{The author was supported by ISF grant no. 1035/21.}

\begin{abstract}
In this paper we obtain a new boundedness criterion for the maximal operator $M$ on variable exponent spaces $L^{p(\cdot)}$.
It is formulated in terms of the variable exponent analogue of the well known weighted $A_{\infty}$ condition.
\end{abstract}

\keywords{Maximal operator, variable Lebesgue spaces, $A_{\infty}$ condition.}
\subjclass[2020]{42B25, 46E30}

\maketitle

\section{Introduction}
Let $p(\cdot):{\mathbb R}^n\to [1,\infty]$ be a measurable function. Denote
by $L^{p(\cdot)}$ the space of real-valued measurable functions $f$ on ${\mathbb R}^n$ such
that
$$\|f\|_{L^{p(\cdot)}}:=\inf\Big\{\la>0:\int_{{\mathbb R}^n}\Big(|f(x)|/\la\Big)^{p(x)}dx\le 1\Big\}<\infty.$$

Given a cube $Q\subset {\mathbb R}^n$ and a locally integrable function $f$, denote $\langle f\rangle_Q:=\frac{1}{|Q|}\int_Qf$.
Consider the Hardy--Littlewood maximal operator $M$ defined by
$$Mf(x):=\sup_{Q\ni x}\langle |f|\rangle_Q,$$
where the supremum is taken over all cubes $Q\subset {\mathbb R}^n$ containing the point $x$.

Let ${\mathcal P}$ denote the class of all exponents $p(\cdot)$ for which $M$ is bounded on $L^{p(\cdot)}$.
This class has been studied in a number of works; see, for example, the recent papers \cite{ADK26,L26} as  well as the monographs \cite{CF13,DHHR11},
where one can find motivation and a detailed historical account.

Given a family of pairwise disjoint cubes ${\mathcal F}$ and a locally integrable function $f$, define the averaging operator $T_{\mathcal F}$ by
$$T_{\mathcal F}f:=\sum_{Q\in F}\langle f\rangle_Q\chi_Q.$$

We say that an exponent $p(\cdot)$ satisfies the condition ${\mathcal A}$ if $T_{\mathcal F}$ is bounded on $L^{p(\cdot)}$ uniformly in ${\mathcal F}$.
Since $|T_{\mathcal F}f|\le Mf$, we obviously have that ${\mathcal P}\Rightarrow {\mathcal A}$. A much deeper converse implication was established by Diening \cite{D05} who proved the following result.

\begin{theorem}[{\cite{D05}}]\label{dr} Let $p_->1$ and $p_+<\infty$\footnote{We use the standard notation $p_-:=\essinf p$ and $p_+:=\esssup p$.}. Then $p(\cdot)\in {\mathcal P}$ if and only if $p(\cdot)\in {\mathcal A}$.
\end{theorem}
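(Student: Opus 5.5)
The implication ${\mathcal P}\Rightarrow{\mathcal A}$ is immediate from $|T_{\mathcal F}f|\le Mf$, so the work is in ${\mathcal A}\Rightarrow{\mathcal P}$. I would follow Diening's strategy and go through the dual space. First I check that condition ${\mathcal A}$ is self-dual. Since $1<p_-\le p_+<\infty$, the dual of $L^{p(\cdot)}$ is $L^{p'(\cdot)}$ with equivalent norms. Moreover $T_{\mathcal F}$ is self-adjoint: $\int T_{\mathcal F}f\cdot g=\int f\cdot T_{\mathcal F}g$. Hence $p(\cdot)\in{\mathcal A}$ if and only if $p'(\cdot)\in{\mathcal A}$, with comparable constants.

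The second step is to make the maximal operator dyadic and sparse. By the standard three-lattice trick, $Mf\lesssim\sum_{j=1}^{3^n}M^{{\mathscr D}_j}f$, so it suffices to bound a dyadic maximal operator $M^{\mathscr D}$. For $f\ge0$ I would run the Calderón--Zygmund decomposition at levels $a^k$ with $a=2^{n+1}$. This gives a sparse family ${\mathcal S}=\bigcup_k{\mathcal S}_k$, where each ${\mathcal S}_k$ consists of pairwise disjoint cubes, together with disjoint sets $E_Q\subset Q$ satisfying $|E_Q|\ge\frac12|Q|$. With these, $M^{\mathscr D}f\lesssim\sum_{Q\in{\mathcal S}}\langle f\rangle_Q\chi_{E_Q}$. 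By duality it then suffices to show
$$\sum_{Q\in{\mathcal S}}\langle f\rangle_Q\int_{E_Q}g\lesssim\|f\|_{L^{p(\cdot)}}\|g\|_{L^{p'(\cdot)}}.$$

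The main obstacle is turning the uniform boundedness of the single-generation operators $T_{\mathcal F}$ into control of this sum over all generations. Diening's key ingredient is a "left-openness" or self-improvement property: condition ${\mathcal A}$ for $p(\cdot)$ implies condition ${\mathcal A}$ for $p(\cdot)/s$ for some $s>1$. I would try to prove this through a reverse-Hölder-type inequality for the modular. Concretely, one shows that $\langle |f|\rangle_Q$ is controlled by $\|f\chi_Q\|_{L^{p(\cdot)}}/\|\chi_Q\|_{L^{p(\cdot)}}$, and that the sequence $\|\chi_Q\|_{L^{p(\cdot)}}$ behaves like an $A_\infty$ weight along chains of nested dyadic cubes. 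This uses ${\mathcal A}$ applied to both $p$ and $p'$, and the two estimates combine into $\|\chi_Q\|_{p(\cdot)}\|\chi_Q\|_{p'(\cdot)}\lesssim|Q|$.

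Granting left-openness, I would finish as follows. The operator $f\mapsto M^{\mathscr D}(|f|^s)^{1/s}$ is bounded on $L^{p(\cdot)}$ whenever a suitable "$M$ on $p/s$" estimate holds, so the argument becomes a bootstrap. Alternatively, one can sum the sparse form directly. The geometric decay across generations, $\sum_{Q'\subset Q,\,Q'\in{\mathcal S}}|Q'|\lesssim|Q|$, combined with the exponent gain $s>1$, makes the contributions of successive generations summable geometrically. Each generation is then bounded by applying $T_{{\mathcal S}_k}$ to $p$ and to $p'$. I expect the self-improvement step to be the hard core of the proof; the remaining steps are routine reductions.
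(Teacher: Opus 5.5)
Your easy direction and your reductions (self-duality of ${\mathcal A}$ via self-adjointness of $T_{\mathcal F}$, passage to dyadic lattices and a sparse family) are correct. They only reformulate the problem, though: $\sum_{Q\in\mathcal S}\langle f\rangle_Q\chi_{E_Q}$ is pointwise comparable to $M^{\mathscr D}f$, and the whole difficulty is summing over infinitely many generations $\mathcal S_k$, each of which ${\mathcal A}$ controls only by $C\|f\|_{L^{p(\cdot)}}$.

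There is a genuine gap. The step you call the hard core, left-openness, is not proved, and the mechanism you sketch cannot prove it. The estimates you name, $\langle|f|\rangle_Q\|\chi_Q\|_{L^{p(\cdot)}}\lesssim\|f\chi_Q\|_{L^{p(\cdot)}}$ and $\|\chi_Q\|_{L^{p(\cdot)}}\|\chi_Q\|_{L^{p'(\cdot)}}\lesssim|Q|$, are exactly the single-cube condition $A_{p(\cdot)}$ and an equivalent reformulation of it. The behaviour of $Q\mapsto\|\chi_Q\|_{L^{p(\cdot)}}$ along nested chains is again information about one cube, or one chain, at a time. Nothing in the sketch uses the interaction of many disjoint cubes, which is precisely what separates ${\mathcal A}$ from $A_{p(\cdot)}$.

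$A_{p(\cdot)}$-type information cannot give left-openness. Since $(T_{\mathcal F}|f|)^s\le T_{\mathcal F}(|f|^s)$ and $\||f|^s\|_{L^{p(\cdot)/s}}=\|f\|^s_{L^{p(\cdot)}}$, the condition $p(\cdot)/s\in{\mathcal A}$ implies $p(\cdot)\in{\mathcal A}$. So deriving left-openness from $A_{p(\cdot)}$ alone would prove $A_{p(\cdot)}\Rightarrow{\mathcal A}$, contradicting the counterexamples quoted in the introduction. The missing idea is a family-level tool that converts the \emph{norm} inequality of ${\mathcal A}$ (in fact ${\mathcal A}_\infty$ suffices) into \emph{modular} inequalities with a summable error. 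In the paper this is Diening's reverse H\"older property $RH$ for disjoint families (Definition~\ref{d3}) together with the function $b$ satisfying $\sum_{Q\in\mathcal F}b(Q)\le 2$ (Lemma~\ref{equi}), which yield \eqref{sainf}.

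Your finishing step has the same defect. The bootstrap through $f\mapsto M^{\mathscr D}(|f|^s)^{1/s}$ needs $M$ bounded on $L^{p(\cdot)/s}$, which is the theorem itself for another exponent. By contrast, $p(\cdot)/s\in{\mathcal A}$ only controls one disjoint family, i.e.\ one level set, at a time. In $L^{p(\cdot)}$ such level-by-level norm bounds do not add up to a strong-type bound unless you have modular estimates whose errors are summable over the levels.

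The direct sparse-form summation fails for the same reason. The $k$-th generation contributes up to $C\|f\|_{L^{p(\cdot)}}\|g\chi_{F_k}\|_{L^{p'(\cdot)}}$, where $F_k:=\bigcup_{Q\in\mathcal S_k}E_Q$, and $\sum_k\|g\chi_{F_k}\|_{L^{p'(\cdot)}}$ is not bounded by $\|g\|_{L^{p'(\cdot)}}$. Sparseness gives decay of Lebesgue measure, not of $L^{p(\cdot)}$-norms, and you do not explain how $s>1$ would produce geometric decay across generations.

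For comparison, the paper's alternative derivation of Theorem~\ref{dr} needs neither left-openness nor sparse forms. It uses ${\mathcal A}$ only through ${\mathcal A}_\infty$ for $p(\cdot)$ and $p'(\cdot)$. It feeds \eqref{sainf}, whose error $t^{\delta}b(Q)\chi_{(0,1)}(t)$ is summable over disjoint cubes and geometric in the level, into a layer-cake argument for the median operator $m_t$. There $|f|$ exceeds the level pointwise on $E_Q$, so the main term at level $k$ is dominated by $\int_{\Omega_k\setminus\Omega_{k-1}}|f|^{p(x)}dx$ over disjoint sets; averages do not give this. Finally it passes from $m_\lambda$ on $L^{p(\cdot)}$ and $L^{p'(\cdot)}$ to $M$ by Theorem~\ref{abstrt}, using the independently known fact that $M$ is bounded on $L^{q(\cdot)}$ if and only if it is bounded on $L^{q'(\cdot)}$.
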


We say that $p(\cdot)$ satisfies the $A_{p(\cdot)}$ condition if there exists a constant $C>0$ such that for every cube $Q$ and any locally integrable $f$,
$$\langle f\rangle_Q\|\chi_Q\|_{L^{p(\cdot)}}\le C\|f\chi_Q\|_{L^{p(\cdot)}}.$$
This condition is weaker than ${\mathcal A}$ because it corresponds to the case when a family ${\mathcal F}$ in the definition of ${\mathcal A}$ consists of only one cube.

If we consider the weighted $L^p(w)$-norm instead of the $L^{p(\cdot)}$-norm, then the $A_{p(\cdot)}$ condition reduces exactly to the usual Muckenhoupt $A_p$ condition\footnote{We do not recall here the main definitions from the theory of $A_p$ weights and refer the reader to \cite[Ch.~7]{G14}.}, which is necessary and sufficient for the boundedness of $M$ on $L^p(w)$. In contrast, the condition ${\mathcal A}$ in Theorem \ref{dr} cannot be replaced by the $A_{p(\cdot)}$ condition; corresponding counterexamples can be found in \cite[Ex. 4.51]{CF13}, \cite[Th. 5.3.4]{DHHR11}, and~\cite{K08}.

It was later observed in \cite{K07,L10} that the $A_{p(\cdot)}$ condition governs the local boundedness of $M$ on $L^{p(\cdot)}$ (that is, boundedness on compact sets). This observation led to attempts to replace the condition ${\mathcal A}$ in Theorem \ref{dr} with a condition of the form $A_{p(\cdot)}\cap B$, where $B$ controls the behavior of $p(\cdot)$ at infinity. In particular, there is a sufficient condition for $p(\cdot)\in {\mathcal P}$ expressed in the form $A_{p(\cdot)}\cap {\mathcal N}$ (see \cite[Th. 4.52]{CF13} and \cite{ADK26} for a recent extension to unbounded exponents), where ${\mathcal N}$ is the integral decay condition introduced by Nekvinda \cite{N04}. A necessary and sufficient condition, with ${\mathcal N}$ replaced by a more complicated condition~${\mathcal U}_{\infty}$, was obtained in \cite{L26}.

Despite the progress described above, both known conditions characterizing the class ${\mathcal P}$, namely ${\mathcal A}$ and $A_{p(\cdot)}\cap {\mathcal U}_{\infty}$, are not easy to verify.
This motivates the search for new, possibly simpler characterizations.

Our main result is a step toward this goal. It can be viewed as a certain simplification of the condition ${\mathcal A}$; at the same time, it completely bypasses the $A_{p(\cdot)}$ condition.

The inspiration for this result comes from a well-known fact (see, for example, \cite[Ex.~7.3.3]{G14}) in the theory of $A_p$ weights: a weight $w$ belongs to $A_p$ if and only if both $w$ and $\si:=w^{-\frac{1}{p-1}}$ belong to $A_{\infty}$. Notice that $L^{p'}(\si)$ is the dual space of $L^p(w)$. For this reason, $\si$ is called the dual weight of $w$.

Let us now define the variable exponent analogue of the $A_{\infty}$ condition.

\begin{definition}\label{ainfty}
We say that $p(\cdot)\in {\mathcal A}_{\infty}$ if there exist $\la\in (0,1)$ and $C>0$ such that for any family ${\mathcal F}$ of pairwise disjoint cubes,
any non-negative sequence $\{t_Q\}_{Q\in {\mathcal F}}$, and any subset $E_Q\subset Q, Q\in {\mathcal F}$ with $|E_Q|\ge \la|Q|$,
$$\|\sum_{Q\in {\mathcal F}}t_Q\chi_Q\|_{L^{p(\cdot)}}\le C\|\sum_{Q\in {\mathcal F}}t_Q\chi_{E_Q}\|_{L^{p(\cdot)}}.$$
\end{definition}

A similar definition was given by Diening \cite{D05}: in \cite{D05} one assumes that for every $\la\in (0,1)$ there exists $C>0$ such that the above inequality holds, whereas here we require only that there exist $\lambda\in(0,1)$ and $C>0$.

The variable exponent analogue of the weighted duality relation $(L^p(w))'=L^{p'}(\si)$ is
$(L^{p(\cdot)})'=L^{p'(\cdot)}$ (see, e.g., \cite[p. 78]{DHHR11}), where $p'(x):=\frac{p(x)}{p(x)-1}$.

Now our main result reads as follows.

\begin{theorem}\label{omr} Assume that $p_->1$ and $p_+<\infty$. Then $p(\cdot)\in {\mathcal P}$ if and only if $p(\cdot), p'(\cdot)\in {\mathcal A}_{\infty}$.
\end{theorem}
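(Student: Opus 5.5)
The necessity direction is short, so I would do it first. If $p(\cdot)\in{\mathcal P}$, then $p(\cdot)\in{\mathcal A}$. Since $\int (T_{\mathcal F}f)g=\int f\,(T_{\mathcal F}g)$ and $(L^{p(\cdot)})'=L^{p'(\cdot)}$ with equivalent norms, ${\mathcal A}$ is self-dual, so $p'(\cdot)\in{\mathcal A}$ as well. Moreover ${\mathcal A}\Rightarrow{\mathcal A}_\infty$ for every $\la\in(0,1)$: put $f=\sum_Q t_Q\chi_{E_Q}$. Then $\langle f\rangle_Q\ge \la t_Q$, hence
$$\sum_Q t_Q\chi_Q\le \la^{-1}T_{\mathcal F}f,$$
and the ${\mathcal A}$ bound gives the ${\mathcal A}_\infty$ inequality with $C=\la^{-1}\|T\|$. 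Applying this to $p(\cdot)$ and $p'(\cdot)$ gives $p(\cdot),p'(\cdot)\in{\mathcal A}_\infty$.

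For sufficiency, I would show $p(\cdot),p'(\cdot)\in{\mathcal A}_\infty\Rightarrow p(\cdot)\in{\mathcal A}$ and then invoke Theorem \ref{dr}. By duality it suffices to bound
$$\sum_{Q\in{\mathcal F}}\langle f\rangle_Q\langle g\rangle_Q|Q|\lesssim \|f\|_{L^{p(\cdot)}}\|g\|_{L^{p'(\cdot)}}$$
for $f,g\ge0$ and any family ${\mathcal F}$ of disjoint cubes. The difficulty is that ${\mathcal A}_\infty$ only lets us pass from $\chi_Q$ to $\chi_{E_Q}$ with $|E_Q|\ge\la|Q|$. We therefore need, for each cube, a large subset on which $f$ (respectively $g$) is comparable to its average, whereas Chebyshev only gives subsets where $f\lesssim\langle f\rangle_Q$.

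My plan is to discretize by layers. Write $F_k=\{2^k<f\le 2^{k+1}\}$, so $\langle f\rangle_Q|Q|\simeq\sum_k 2^k|Q\cap F_k|$. Then split the cubes according to whether the relevant layer occupies a proportion $\ge\la$ of $Q$ or not. In the first case the ${\mathcal A}_\infty$ condition for $p(\cdot)$, applied with $E_Q=Q\cap F_k$, controls $\|\sum\langle f\rangle_Q\chi_Q\|$ by $\|f\|$. In the second case $g$ carries most of the mass of the pairing on a set of proportion $\ge 1-\la$ where $f$ is small. There I would use the ${\mathcal A}_\infty$ condition for $p'(\cdot)$, with $E_Q$ chosen from the level sets of $g$ inside $Q$. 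The fact that only one $\la$ is available would be handled by using the ${\mathcal A}_\infty$ condition iteratively (a Gehring-type self-improvement), upgrading it to every $\la'\in(0,1)$ with constants depending on $\la'$. This matches Diening's form of the definition, which he showed, together with its dual, yields ${\mathcal A}$.

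I expect this self-improvement, or equivalently the combinatorial splitting just described, to be the main obstacle. The first thing I would try is the iteration: given $E_Q$ with $|E_Q|\ge\la^2|Q|$, cover $E_Q$ by a disjoint family of subcubes $P\subset Q$ in which $E_Q$ has density $\ge\la$, using a Calderón--Zygmund decomposition at height $\la$. Then apply the ${\mathcal A}_\infty$ condition twice, once passing from the cubes $Q$ to the union of the subcubes $P$ and once passing from the subcubes $P$ to $E_Q$. The modular estimates with $p_+<\infty$ and $p_->1$ should control the loss at each step. Once the condition holds for arbitrary $\la$, I would reduce the sufficiency to Diening's argument in \cite{D05}, or carry out the layer splitting directly with $\la$ close to $1$, so that both bad sets are small.
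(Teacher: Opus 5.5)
Your necessity argument is correct and coincides with the paper's. The gap is in sufficiency. Reducing to $p(\cdot)\in{\mathcal A}$ and invoking Theorem \ref{dr} is a legitimate strategy. However, the implication $p(\cdot),p'(\cdot)\in{\mathcal A}_\infty\Rightarrow p(\cdot)\in{\mathcal A}$ is where the whole difficulty lies, and your proposal supplies no mechanism for it.

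\textbf{Why the layer splitting does not work.} The condition ${\mathcal A}_\infty$ only lets you replace $\chi_Q$ by $\chi_{E_Q}$ with $|E_Q|\ge\lambda|Q|$. So it controls median-type quantities $(f\chi_Q)^*(\lambda|Q|)$; this is exactly why it is equivalent to boundedness of some $m_t$ (Theorem \ref{boundml}). By contrast, ${\mathcal A}$ involves the averages $\langle f\rangle_Q$. Your layer splitting does not bridge these two.
\begin{itemize}
\item If $\langle f\rangle_Q|Q|\simeq\sum_k 2^k|Q\cap F_k|$ is carried by layers each occupying a small proportion of $Q$, your first case captures nothing.
\item In the second case, nothing forces $\int_Q g$ to be carried by a level set of $g$ of proportion $\ge\lambda$.
\item If $f$ and $g$ are both concentrated on small subsets of $Q$, there is no admissible $E_Q$ for either condition. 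The term $\langle f\rangle_Q\langle g\rangle_Q|Q|$ is then simply not estimated.
\end{itemize}
Already for weights, passing from $w,\sigma\in A_\infty$ to $w\in A_p$ needs an extra tool, such as the reverse Jensen inequality, rather than a level-set splitting.

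\textbf{Why the fallback route fails.} Your Calder\'on--Zygmund iteration does not lower the density threshold.
\begin{itemize}
\item The stopping cubes $P$ have density of $E_Q$ larger than $\lambda$. This bounds $|\bigcup P|$ from \emph{above} by $|E_Q|/\lambda$.
\item Maximality only gives $|\bigcup P|\ge |E_Q|/(2^n\lambda)$.
\item Hence the first application of ${\mathcal A}_\infty$, from $Q$ to $\bigcup P$, is legitimate only if $|E_Q|\ge 2^n\lambda^2|Q|$. This is no gain when $\lambda\ge 2^{-n}$; for instance, if $E_Q$ is a dyadic child of $Q$ and $\lambda>2^{-n}$, then $\bigcup P=E_Q$.
\end{itemize}
That is exactly the relevant regime, since the hypothesis may only hold for $\lambda$ near $1$.

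More importantly, the step you defer to \cite{D05} is not there. In \cite{D05}, ${\mathcal A}_\infty$ appears only as the intermediate link ${\mathcal A}\Rightarrow{\mathcal A}_\infty\Rightarrow RH$, with ${\mathcal A}$ itself as the standing hypothesis. The implication $p(\cdot),p'(\cdot)\in{\mathcal A}_\infty\Rightarrow p(\cdot)\in{\mathcal P}$ is precisely the new content of Theorem \ref{omr}.

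\textbf{How the paper closes this step.} It proceeds in two stages.
\begin{enumerate}
\item Theorem \ref{boundml} turns ${\mathcal A}_\infty$ for $p(\cdot)$ and for $p'(\cdot)$ into boundedness of $m_t$ on $L^{p(\cdot)}$ and on $L^{p'(\cdot)}$. Its proof uses the reverse H\"older property, the modular estimate of Theorem \ref{impl} with the summable error $b(Q)$, and a Besicovitch covering.
\item The abstract Theorem \ref{abstrt} from \cite{L27} upgrades boundedness of $m_t$ to boundedness of $M$. Its hypothesis is verified via Theorem \ref{dr} and $(L^{p(\cdot)})^{1/r}=L^{rp(\cdot)}$.
\end{enumerate}
The median-to-average passage you attempt by hand is exactly what Theorem \ref{abstrt} supplies.
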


The necessity part of this theorem is elementary. Indeed, suppose that ${\mathcal F}$ is a family of pairwise disjoint cubes. Let $E_Q\subset Q, |E_Q|\ge \la|Q|$ for $Q\in {\mathcal F}$.
Define $\f:=\sum_{Q\in {\mathcal F}}t_Q\chi_{E_Q}$. Then
$$\sum_{Q\in {\mathcal F}}t_Q\chi_Q\le\frac{1}{\la}T_{\mathcal F}\f.$$
From this, since ${\mathcal P}\Rightarrow {\mathcal A}$, we obtain that $p(\cdot)\in {\mathcal A}_{\infty}$. Since the operator $T_{\mathcal F}$ is self-adjoint, the condition ${\mathcal A}$ also means that $T_{\mathcal F}$ is bounded on~$L^{p'(\cdot)}$, and hence $p'(\cdot)\in {\mathcal A}_{\infty}$. Thus, the condition $p(\cdot), p'(\cdot)\in {\mathcal A}_{\infty}$ in Theorem \ref{omr} can be viewed as a testing condition of ${\mathcal A}$ applied to elementary functions of the form $\f$ above.

The sufficiency part of Theorem \ref{omr} is more complicated and is based on a recent result from \cite{L27}. In order to formulate it, we introduce
the $\la$-median maximal operator $m_{\la}$ defined for $\la\in (0,1)$ by
$$m_{\la}f(x):=\sup_{Q\ni x}(f\chi_Q)^*(\la|Q|),$$
where the supremum is taken over all cubes $Q\subset {\mathbb R}^n$ containing the point $x$, and $f^*$ denotes the non-increasing rearrangement of $f$.


By Chebyshev's inequality, $(f\chi_Q)^*(\la|Q|)\le \frac{1}{\la}\langle |f|\rangle_Q$, which shows that $m_{\la}f$ is smaller than $Mf$, in general. However, we have the following result.

\begin{theorem}[{\cite{L27}}]\label{abstrt}
Let $X$ be a Banach function space with the following property: for every $r\ge 1$, the Hardy--Littlewood maximal operator $M$ is bounded on $X^{1/r}$ if and only if $M$ is bounded on $(X^{1/r})'$. Then the following are equivalent:
\begin{enumerate}[(i)]
\item $M$ is bounded on $X$ and on $X'$;
\item $m_{\la}$ is bounded on $X$ and on $X'$ for some $\la\in (0,1)$.
\end{enumerate}
\end{theorem}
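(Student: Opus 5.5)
The implication (i)$\Rightarrow$(ii) is immediate from the pointwise bound $m_\la f\le \frac1\la Mf$ noted above, applied in $X$ and in $X'$. For (ii)$\Rightarrow$(i) I would extract from the boundedness of $m_\la$ a quantitative restricted estimate for $M$, transfer it to convexified spaces, and then use the hypothesis on $X$ to come back to $X$ and $X'$. I have not checked the last transfer in detail.

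\emph{Step 1: from $m_\la$ to a level-set estimate.} For a measurable set $E$ and a cube $Q$ with $|E\cap Q|>\la|Q|$ we have $(\chi_E\chi_Q)^*(\la|Q|)=1$. Hence
\[
\chi_{\{M^{c}\chi_E>\la\}}\le m_\la\chi_E ,
\]
where $M^c$ is the cube maximal operator with strict inequality. Boundedness of $m_\la$ on $X$ then gives $\|\chi_{\{M\chi_E>\la\}}\|_X\le C\|\chi_E\|_X$. I would iterate this through a Calder\'on--Zygmund (or dyadic) decomposition of $\{M\chi_E>\la^k\}$, using the inclusion $\{M\chi_E>\la^{k+1}\}\subset\{M\chi_{\{M\chi_E>\la^k\}}>c\la\}$. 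This should yield
\[
\|\chi_{\{M\chi_E>\alpha\}}\|_X\le C\alpha^{-s}\|\chi_E\|_X,\qquad 0<\alpha<1,
\]
with $s=\log C/\log(1/\la)$. The same argument applies in $X'$.

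\emph{Step 2: convexification.} Since $\|\chi_F\|_{X^{r}}=\|\chi_F\|_X^{1/r}$, the estimate of Step 1 becomes $\|\chi_{\{M\chi_E>\alpha\}}\|_{X^{r}}\le C^{1/r}\alpha^{-s/r}\|\chi_E\|_{X^r}$. For $r$ large the exponent $s/r$ is below $1$. A restricted weak-type estimate of order strictly below one for the sublinear operator $M$, combined with the $L^\infty$ bound and a layer-cake decomposition of $f$, should give strong boundedness of $M$ on $X^{r}$ for $r$ large. By the same reasoning $M$ is bounded on $(X')^{r}$ for $r$ large. Here $M$ and $m_\la$ commute with powers, $m_\la(|f|^r)=(m_\la f)^r$, which I would use to move between levels.

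\emph{Step 3: descending to $X$.} I would combine the boundedness of $M$ on $X^{r}$ and on $(X')^{r}$ with the duality $(X^{1/r})'$ from the hypothesis, applied with exponent $r$, to upgrade to boundedness of $M$ on $X^{1/r}$ or on $(X^{1/r})'$. I would try to do this with a Rubio de Francia construction: build $A_1$-type majorants $Rh\ge h$ in $X^{r}$ and $Rg\ge g$ in $(X')^{r}$. Lozanovskii factorization would then express $\|f\|_{X}$ through a weighted $L^p$ norm with weight $(Rh)^{1/r}(Rg)^{-1/r}$, and $M$ would be bounded on that weighted $L^p$ by the Jones factorization theorem. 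I expect this last passage to be the main obstacle. It should require the hypothesis that boundedness of $M$ on $X^{1/r}$ and on its associate space are equivalent, for every $r$, in order to close the duality loop. I would first try to obtain boundedness on $(X^{1/r})'$ via this factorization and then invoke the hypothesis to get $X^{1/r}$.
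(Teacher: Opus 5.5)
Your direction (i)$\Rightarrow$(ii) is correct, but (ii)$\Rightarrow$(i) is not proved, and the gap is not confined to the last transfer. The paper only quotes this theorem, so I judge your sketch on its own terms.

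\textbf{Step 3 is where the whole theorem lives, and your sketch does not supply it.} As written it is a list of tools, not an argument. Two examples show how much it has to carry.

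\emph{The space $X=L^1$.} It satisfies (ii), since $m_\lambda$ is bounded on $L^1$ and on $L^\infty$, as the paper notes. It also satisfies the hypothesis for every $r>1$, because $M$ is bounded on both $L^r$ and $L^{r'}$. Yet (i) fails; the hypothesis breaks down only at $r=1$. So any proof must use the case $r=1$ of the hypothesis, and must carry the information obtained at large $r$ all the way down to $r=1$.

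\emph{The space $X=L^p(w)$ with $p>1$.} Here the hypothesis holds automatically, since both sides are equivalent to $w\in A_{rp}$. Condition (ii) says exactly that $w$ and $\sigma=w^{-1/(p-1)}$ lie in $A_\infty$. The output of your Steps 1--2, namely $w\in A_{rp}$ and $\sigma\in A_{rp'}$ for large $r$, is therefore just a restatement of (ii). Hence Step 3 must contain a Banach-function-space version of the nontrivial implication $w,\sigma\in A_\infty\Rightarrow w\in A_p$.

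Your sketch meets neither requirement. It invokes the hypothesis only at the large $r$ of Step 2. The space it targets is either one where Step 2 already gives boundedness (your $X^{r}$ is the statement's $X^{1/r}$), or, if a concavification is meant, one about which the hypothesis says nothing. Moreover, every ingredient you list is available for $X=L^1$: Rubio de Francia majorants in $L^r$ and $L^\infty$, Lozanovskii and Jones factorization, and the hypothesis at that $r$. So no argument built only from them can give boundedness of $M$ on $X$. What is missing is a genuine descent mechanism from large $r$ to $r=1$. It must use both halves of (ii), and its control must survive down to the point where the $r=1$ case of the hypothesis can be applied.

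\textbf{Steps 1--2 also need repair.}

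\emph{Step 2.} Bounds on characteristic functions do not give strong type in a general Banach function space. After the layer-cake decomposition $f\approx\sum_k 2^k\chi_{E_k}$ with $E_k=\{|f|>2^k\}$, summing your single-set bounds controls only the Lorentz-type quantity $\sum_k 2^k\|\chi_{E_k}\|$. For $L^p$ this is the $L^{p,1}$ norm, and the interpolation that rescues the argument in $L^p$ is not available here. What you actually need is a bound for $\sum_k 2^k\chi_{\{M\chi_{E_k}>\alpha\}}$, and by (\ref{embeq}) this is comparable to $m_\alpha f$. So work with $m_\mu$ on arbitrary functions throughout:
\begin{itemize}
\item If $\|m_\mu\|_{X\to X}\le C\mu^{-s}$, the identity $m_\mu(|f|^r)=(m_\mu f)^r$ gives $\|m_\mu\|_{X^{1/r}\to X^{1/r}}\le C^{1/r}\mu^{-s/r}$.
\item The formula $\langle |f|\rangle_Q=\int_0^1 (f\chi_Q)^*(t|Q|)\,dt$ yields the pointwise bound $Mf\le\sum_{j\ge1}2^{-j}m_{2^{-j}}f$.
\item Together these give boundedness of $M$ on $X^{1/r}$ for $r>s$.
\end{itemize}

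\emph{Step 1.} The Calder\'on--Zygmund argument you invoke gives your inclusion only with $c=2^{-n}$. Boundedness of $m_\lambda$ controls $\{M\chi_F>\lambda\}$, not the larger set $\{M\chi_F>c\lambda\}$, and your exponent $s=\log C/\log(1/\lambda)$ tacitly takes $c=1$. With $c=2^{-n}$ the iteration closes, using level ratio $2^n\lambda$, only when $\lambda<2^{-n}$. But boundedness of $m_\lambda$ is weakest for $\lambda$ near $1$, and that is exactly the regime in which the paper uses the theorem: the proof of Theorem \ref{boundml} produces $t$ close to $1$. So you still owe an argument for the polynomial growth of $\|m_\mu\|_{X\to X}$ as $\mu\to0$, for instance a sharp form of the inclusion with $c=1$.
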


Here $\|f\|_{X^{1/r}}:=\||f|^r\|_{X}^{1/r},$ and $X'$ denotes the associate space of $X$.
Now observe that it is an immediate corollary of Theorem \ref{dr} that if $p_->1$ and $p_+<\infty$, then $M$ is bounded on $L^{p(\cdot)}$ if and only if $M$ is bounded on $L^{p'(\cdot)}$
(because the operator $T_{\mathcal F}$ is self-adjoint). An alternative proof of this result can be also found in \cite{L17}. Hence, if we take $X=L^{p(\cdot)}$ in Theorem \ref{abstrt} with $p_->1$ and $p_+<\infty$, then
observing that $(L^{p(\cdot)})^{1/r}=L^{rp(\cdot)}$, we see that the assumption of Theorem \ref{abstrt} is satisfied. Therefore, in order to establish that $M$ is bounded on $L^{p(\cdot)}$ it suffices to show that
$m_{\la}$ is bounded on $L^{p(\cdot)}$ and on $L^{p'(\cdot)}$ for some $\la\in (0,1)$.

Taking into account the above discussion, the sufficiency part of Theorem \ref{omr} is a consequence of Theorem \ref{abstrt} and the following result, which will be proved below.

\begin{theorem}\label{boundml}
Assume that $p_-\ge 1$ and $p_+<\infty$. Then $p(\cdot)\in {\mathcal A}_{\infty}$ if and only if there exists $t\in (0,1)$ such that $m_{t}$ is bounded on $L^{p(\cdot)}$.
\end{theorem}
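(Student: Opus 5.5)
We prove the two implications separately; the sufficiency of boundedness of $m_t$ is the easy direction.

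\emph{Boundedness of $m_t$ implies ${\mathcal A}_\infty$.} Suppose $m_t$ is bounded on $L^{p(\cdot)}$. Let ${\mathcal F}$ be a family of pairwise disjoint cubes, $t_Q\ge 0$, and $E_Q\subset Q$ with $|E_Q|\ge \la|Q|$, where we take $\la\in(t,1)$ (for instance $\la=(1+t)/2$). Set $\f=\sum_{Q\in{\mathcal F}} t_Q\chi_{E_Q}$. For $x\in Q$ we have $\f\chi_Q=t_Q\chi_{E_Q}$, so
\[
(\f\chi_Q)^*(t|Q|)=t_Q,
\]
because $|E_Q|\ge\la|Q|>t|Q|$. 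Hence $\sum_Q t_Q\chi_Q\le m_t\f$, and the ${\mathcal A}_\infty$ inequality follows with $C=\|m_t\|$.

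\emph{${\mathcal A}_\infty$ implies boundedness of $m_t$.} Let $p(\cdot)\in{\mathcal A}_\infty$ with parameters $\la$ and $C$. The plan is a Calder\'on--Zygmund type decomposition of the level sets of $m_t f$ for a suitable small $t$ (depending on $\la$ and $n$), using dyadic lattices.

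\begin{enumerate}
\item \textbf{Reduction to dyadic operators.} By the three-lattice theorem, every cube $Q$ is contained in a cube $P$ from one of $3^n$ dyadic lattices ${\mathscr D}^j$ with $\ell(P)\le 6\ell(Q)$. Then
\[
(f\chi_Q)^*(t|Q|)\le (f\chi_P)^*(t6^{-n}|P|),
\]
so $m_t f\le \sum_j m^{{\mathscr D}^j}_{t6^{-n}}f$. It therefore suffices to bound the dyadic operator $m^{\mathscr D}_s$ for $s$ small enough.

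\item \textbf{Level-set decomposition.} Fix $\a>1$ (say $\a=2$). For $k\in\mathbb Z$, let $\O_k=\{m^{\mathscr D}_s f>\a^k\}$ and write it as a union of maximal dyadic cubes $Q^k_j$; these satisfy $|\{|f|>\a^k\}\cap Q^k_j|> s|Q^k_j|$. Put
\[
E^k_j=Q^k_j\setminus\O_{k+1}.
\]
The key claim is that $|E^k_j|\ge\la|Q^k_j|$ once $s$ is small. For this I would use the local weak-type estimate
\[
|\O_{k+1}\cap Q^k_j|\le c_n s^{-1}\,|\{|f|>\a^{k+1}\}\cap Q^k_j|.
\]
This alone is insufficient, since the right-hand side need not be small relative to $|Q^k_j|$.

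\item \textbf{Using the median structure.} Because $m_s$ responds to medians rather than averages, I would instead use a sparse-type bound: $m^{\mathscr D}_s f\lesssim \sum_k \a^k\chi_{\O_k}$ pointwise. Combining this with ${\mathcal A}_\infty$ applied to the disjoint family $\{Q^k_j\}_j$ at each level, and the fact that the sets $E^k_j$ are pairwise disjoint across all $k,j$, should give
\[
\Big\|\sum_{k,j}\a^k\chi_{Q^k_j}\Big\|\lesssim \Big\|\sum_{k,j}\a^k\chi_{E^k_j}\Big\|\lesssim \big\|m^{\mathscr D}_s f\big\|.
\]
This is circular, so a different choice of $E$ is needed. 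Take $E^k_j=Q^k_j\cap\{|f|>\a^k\}$, of measure at least $s|Q^k_j|$. Then $\sum\a^k\chi_{E^k_j}\lesssim|f|$, since the sum over $k$ of $\a^k\chi_{\{|f|>\a^k\}}$ is geometric.

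\item \textbf{Upgrading the parameter.} The remaining problem is that ${\mathcal A}_\infty$ is only given for $\la$, whereas step 3 needs it for the small parameter $s$. So the first step of this direction is to show that ${\mathcal A}_\infty$ with some $\la$ implies ${\mathcal A}_\infty$ with every $\la'\in(0,1)$, i.e.\ Diening's version. I would prove this by iteration: given $E\subset Q$ with $|E|\ge \la'|Q|$, use a Calder\'on--Zygmund/density argument to produce dyadic subcubes where $E$ has density at least $\la$, and apply the inequality repeatedly, a bounded number of times depending on $\la'$. Alternatively, for $p_+<\infty$ I would reverse the ${\mathcal A}_\infty$ inequality into a modular inequality and use self-improvement.

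\item \textbf{Handling overlap.} Finally, ${\mathcal A}_\infty$ is stated for a single disjoint family, while the $Q^k_j$ overlap across levels. I would handle this by grouping the levels $k$ modulo a large integer $N$ and applying the estimate to the resulting sums via the lattice property of the norm, in the spirit of Diening's argument.
\end{enumerate}

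The main obstacle is step 4, upgrading from one $\la$ to arbitrarily small $\la'$, together with the level-overlap issue in step 5.
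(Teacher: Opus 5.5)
Your proof that boundedness of $m_t$ implies $\mathcal{A}_\infty$ is correct and is the paper's argument. The converse is not proved. Steps 4 and 5 are left open, and they carry the whole difficulty.

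Part of the difficulty is self-inflicted. Since $(f\chi_Q)^*(t|Q|)$ is non-increasing in $t$, the operator $m_t$ gets \emph{smaller} as $t\to 1$, and the statement only asks for \emph{some} $t$. The paper takes $t>\max(\lambda,\eta)$, so the given $\lambda$ suffices and no passage to a small $\lambda'$ is ever needed. Your sketch of that passage would not close anyway: after applying the inequality on subcubes where $E$ has density at least $\lambda$, returning from their union to $Q$ is again an instance at small density. It is your three-lattice reduction that forces the small parameter $t6^{-n}$. The paper instead works with $m_{tr^n,r}$, the supremum over cubes $Q$ with $x\in rQ$, and uses the Besicovitch--Morse covering of Theorem~\ref{cov}. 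It chooses $r$ so close to $1$ that $tr^n-(1-r^n)>\max(\lambda,\eta)$. Then $A=\{x\in Q:|f(x)|\ge (f\chi_Q)^*(tr^n|Q|)\}$ has density at least $\lambda$, and $E=A\cap rQ$ has density at least $\eta$ and lies in the level set. The cost is only $N(n,r)$ disjoint subfamilies.

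The more serious gap is step 5. Grouping levels modulo $N$ does not make the family disjoint, since a cube of level $k+N$ sits inside a cube of level $k$, so $\mathcal{A}_\infty$ cannot be applied to it. The norm inequality you want in step 3 for the nested family $\{Q^k_j\}_{k,j}$ is essentially the theorem itself, and $L^{p(\cdot)}$ norms cannot be added over levels.

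The missing idea is to pass to modulars and use a modular, single-cube form of $\mathcal{A}_\infty$ with a summable error, namely Theorem~\ref{impl}. It says: if $E\subset Q$, $|E|\ge\eta|Q|$ and $0<s\le 1/\|\chi_Q\|_{L^{p(\cdot)}}$, then $\int_Q s^{p(x)}dx\le 2\big(\int_E s^{p(x)}dx+s^{\delta}b(Q)\chi_{(0,1)}(s)\big)$, where $\sum_{Q\in\mathcal F}b(Q)\le1$ for every disjoint family $\mathcal F$. Its proof rests on Diening's implication $\mathcal{A}_\infty\Rightarrow RH$ and Lemmas~\ref{equi} and~\ref{RH}. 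Nothing in your plan plays this role.

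The argument then runs as follows. With $\Omega_k=\{m_{tr^n,r}f>\gamma^k\}$ and $\gamma\in(0,1)$, one has $\int(m_{tr^n,r}f)^{p(x)}dx\le\gamma^{-p_+}\sum_k\int_{\Omega_k}\gamma^{kp(x)}dx$. The condition $\mathcal{A}_\infty$ together with $\|f\|_{L^{p(\cdot)}}=1$ shows that $s=\gamma^k/C$ is admissible for every covering cube. Applying Theorem~\ref{impl} to each subfamily and splitting $E\subset(\Omega_k\setminus\Omega_{k-1})\cup\Omega_{k-1}$ yields, for $\gamma$ small,
\[
\int_{\Omega_k}\gamma^{kp(x)}dx\le c\int_{\Omega_k\setminus\Omega_{k-1}}|f(x)|^{p(x)}dx+\frac12\int_{\Omega_{k-1}}\gamma^{(k-1)p(x)}dx+c\,\gamma^{k\delta}\chi_{(0,1)}(\gamma^k/C).
\]
Summing over $k$ absorbs the overlap between levels; the sum is finite a priori after reducing to bounded, compactly supported $f$. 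This recursion, not a norm estimate, is what closes the argument.
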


In this statement, we allow for the possibility that $p_-=1$. This reflects the fact that the operator $m_{\la}$, unlike $M$, is bounded on $L^1({\mathbb R})$.
Note that $m_{\la}$ is also trivially bounded on $L^{\infty}$. However, the assumption $p_+<\infty$ is essential in our method of the proof.

In conclusion, we observe that the proof of Theorem \ref{dr} in \cite{D05} is rather involved. We now see that it can also be obtained in an alternative way. Indeed, it follows from a combination of Theorems \ref{abstrt}, \ref{boundml}, together with the fact that  $M$ is bounded on $L^{p(\cdot)}$ if and only if~$M$ is bounded on $L^{p'(\cdot)}$. (As mentioned above, this follows from Theorem~\ref{dr}, but it can also be proved independently, as in \cite{L17}.) We do not claim that this approach is substantially simpler; rather, it provides an alternative viewpoint on Theorem \ref{dr}.

The paper is organized as follows. Some standard facts are collected in Section 2. In Section 3, we prove a result that is essentially known and can be found in \cite{L17}.
Since this is the main technical tool needed for the proof of Theorem \ref{boundml}, we provide a large part of the argument for the reader’s convenience. The proof of Theorem~\ref{boundml} is given in Section 4.

\section{Preliminaries}
In this section, we collect several standard facts that will be used later.

For a measurable function $f$ on a cube $Q$ and for $\la\in (0,1)$, the non-increasing rearrangement is defined by
$$(f\chi_Q)^*(\la|Q|):=\inf\{\a>0: |\{x\in Q:|f(x)|>\a\}|\le \la|Q|\}.$$
It follows from this definition that
$$|\{x\in Q:|f(x)|>f^*(\la|Q|)\}|\le \la|Q|$$
and
$$|\{x\in Q:|f(x)|\ge f^*(\la|Q|)\}|\ge \la|Q|.$$
Also, the equality
\begin{equation}\label{embeq}
\{x\in {\mathbb R}^n: m_{\la}f(x)>\a\}=\{x\in {\mathbb R}^n:M\chi_{\{|f|>\a\}}(x)>\la\}
\end{equation}
is an immediate corollary of the definition.

For a measurable set $\O\subset {\mathbb R}^n$ of positive measure and for a variable exponent $p(\cdot)$, denote $p_-(\O):=\essinf_{\O}p$ and $p_+(\O):=\esssup_{\O}p$.
Denote $\varrho(f):=\int_{\O}|f(x)|^{p(x)}dx$. The following statement can be found in \cite[p. 25]{CF13}.

\begin{prop}\label{modest} Assume that $p_+(\O)<\infty$. If $\|f\chi_{\O}\|_{L^{p(\cdot)}}>1$, then
$$\varrho(f)^{1/p_+(\O)}\le \|f\chi_{\O}\|_{L^{p(\cdot)}}\le \varrho(f)^{1/p_-(\O)}.$$
If $\|f\chi_{\O}\|_{L^{p(\cdot)}}\le 1$, then
$$\varrho(f)^{1/p_-(\O)}\le \|f\chi_{\O}\|_{L^{p(\cdot)}}\le \varrho(f)^{1/p_+(\O)}.$$
\end{prop}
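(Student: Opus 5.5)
The plan is to reduce both chains of inequalities to the single identity
$$\int_{\O}\Big(\frac{|f(x)|}{\mu}\Big)^{p(x)}dx=1,\qquad \mu:=\|f\chi_{\O}\|_{L^{p(\cdot)}},$$
valid whenever $0<\mu<\infty$. The two cases then follow by comparing $\mu^{-p(x)}$ with the constants $\mu^{-p_\pm(\O)}$. The degenerate case $\mu=0$ is trivial, since then $f=0$ a.e. on $\O$ and $\varrho(f)=0$. If $\mu=\infty$, the argument below, run with arbitrarily large $\mu'$ in place of $\mu$, shows $\varrho(f)=\infty$, and again there is nothing to prove. For brevity write $\varrho_\O(g):=\int_\O|g|^{p(x)}dx$.

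\emph{Step 1: the modular equals $1$ at the norm.} This is where $p_+(\O)<\infty$ is needed, and I expect it to be the main (though mild) obstacle.

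For $\nu>\mu$ the definition of the norm, together with monotonicity of $\nu\mapsto\varrho_\O(f/\nu)$, gives $\varrho_\O(f/\nu)\le 1$. Letting $\nu\downarrow\mu$, monotone convergence yields $\varrho_\O(f/\mu)\le 1$; in particular this quantity is finite.

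Conversely, suppose $\varrho_\O(f/\mu)<1$. For $\nu<\mu$ we have pointwise $(\mu/\nu)^{p(x)}\le(\mu/\nu)^{p_+(\O)}$, hence
$$\varrho_\O(f/\nu)\le(\mu/\nu)^{p_+(\O)}\varrho_\O(f/\mu).$$
Choosing $\nu$ close enough to $\mu$ makes the right-hand side at most $1$. This contradicts the infimum in the definition of $\mu$, so $\varrho_\O(f/\mu)=1$.

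\emph{Step 2: the case $\mu>1$.} Here $\mu^{-p_+(\O)}\le\mu^{-p(x)}\le\mu^{-p_-(\O)}$ on $\O$. Integrating $|f|^{p(x)}\mu^{-p(x)}$ and using Step 1 gives
$$\mu^{-p_+(\O)}\varrho(f)\le 1\le\mu^{-p_-(\O)}\varrho(f).$$
The left inequality rearranges to $\varrho(f)^{1/p_+(\O)}\le\mu$, and the right one to $\mu\le\varrho(f)^{1/p_-(\O)}$.

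\emph{Step 3: the case $0<\mu\le1$.} Now the inequalities between the powers of $\mu$ reverse: $\mu^{-p_-(\O)}\le\mu^{-p(x)}\le\mu^{-p_+(\O)}$ on $\O$. The same integration gives
$$\mu^{-p_-(\O)}\varrho(f)\le1\le\mu^{-p_+(\O)}\varrho(f),$$
that is, $\varrho(f)^{1/p_-(\O)}\le\mu\le\varrho(f)^{1/p_+(\O)}$. This completes both statements of Proposition \ref{modest}.
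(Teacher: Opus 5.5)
Your proof is correct. The paper gives no argument of its own and simply cites the monograph of Cruz-Uribe and Fiorenza, where the proof is essentially yours: use $p_+(\Omega)<\infty$ to show that the modular of $f/\|f\chi_\Omega\|_{L^{p(\cdot)}}$ over $\Omega$ equals $1$, then bound $\mu^{-p(x)}$ between $\mu^{-p_+(\Omega)}$ and $\mu^{-p_-(\Omega)}$, in the order dictated by whether $\mu>1$ or $\mu\le 1$.
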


We will use the following result, which is an immediate corollary of the Besikovitch--Morse covering theorem (see \cite[p. 6]{G75}).

\begin{theorem}\label{cov} Let $r\in (0,1)$, and let $A\subset {\mathbb R}^n$ be bounded. For each $x\in A$, choose a cube $Q_x$ such that $x\in rQ_x$. Then one can extract from the covering
$\{Q_x\}_{x\in A}$ a subcovering $\{Q_j\}$ of $A$, i.e. $A\subset \cup_jQ_j$, such that there exists $N=N(n,r)\in {\mathbb N}$ for which the family $\{Q_j\}$ can be partitioned into $N$
subfamilies of pairwise disjoint cubes.
\end{theorem}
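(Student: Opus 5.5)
This is a classical statement, and I would prove it by running the Besicovitch selection scheme in the form due to Morse. The key geometric input is what the hypothesis $x\in rQ_x$ buys us. Write $\ell_x$ for the side length of $Q_x$. Then $Q_x$ contains the cube centered at $x$ of side $(1-r)\ell_x$, and so it contains the ball $B(x,c\ell_x)$ with $c=c(r)=(1-r)/2$. It is also contained in $B(x,C\ell_x)$ with $C=\sqrt n$. So each $Q_x$ is a set that is "star-shaped and uniformly fat" around its own point $x$, with constants depending only on $n$ and $r$. This is exactly the setting of Morse's theorem.

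\emph{Step 1: reduction.} If $\sup_{x\in A}\ell_x=\infty$, pick $x$ with $c\ell_x>\operatorname{diam}A$. Then $A\subset B(x,c\ell_x)\subset Q_x$, and the single cube $\{Q_x\}$ is the required subcovering with $N=1$. So assume $\sup\ell_x<\infty$.

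\emph{Step 2: greedy selection.} Choose $x_1\in A$ with $\ell_{x_1}>\frac12\sup_{x\in A}\ell_x$. Inductively, if $A_k:=A\setminus\bigcup_{j<k}Q_{x_j}$ is nonempty, choose $x_k\in A_k$ with $\ell_{x_k}>\frac12\sup_{x\in A_k}\ell_x$; otherwise stop.

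Two facts follow from this rule.
\begin{enumerate}[(a)]
\item For $j<k$ we have $x_k\notin Q_{x_j}$ and $\ell_{x_k}\le 2\ell_{x_j}$.
\item The balls $B(x_k,\frac c3\ell_{x_k})$ are pairwise disjoint.
\end{enumerate}
For (b), take $j<k$. By (a), $|x_j-x_k|\ge c\ell_{x_j}\ge \frac c2\ell_{x_k}$, and $\frac c3\ell_{x_j}+\frac c3\ell_{x_k}<|x_j-x_k|$ follows from these two bounds.

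Since $A$ is bounded and $\sup\ell_x<\infty$, these disjoint balls lie in a bounded region. Hence if the process is infinite, $\ell_{x_k}\to0$. Now suppose some $y\in A$ were never covered. Then $y\in A_k$ for every $k$, so $\ell_{x_k}>\frac12\ell_y$ for every $k$. This contradicts $\ell_{x_k}\to 0$. Therefore $A\subset\bigcup_k Q_{x_k}$.

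\emph{Step 3: bounded interaction, which I expect to be the main obstacle.} I would show there is $M=M(n,r)$ such that each $Q_{x_k}$ meets at most $M$ cubes $Q_{x_j}$ with $j<k$. Once this is known, color the cubes greedily in the order of selection, giving $Q_{x_k}$ a color not used by any earlier cube it meets. This uses $N=M+1$ colors, and each color class is a pairwise disjoint family, which proves the theorem.

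To bound the earlier cubes $Q_{x_j}$ meeting $Q_{x_k}$, I would split them into two groups.
\begin{enumerate}[(i)]
\item \emph{Comparable cubes}, with $\ell_{x_j}\le K\ell_{x_k}$ for a large constant $K$ to be fixed. Each such cube lies in $B(x_k,(C+2CK)\ell_{x_k})$. Together with $\ell_{x_j}\ge\frac12\ell_{x_k}$ and the disjointness in (b), a volume count gives at most $M_1(n,r,K)$ of them.
\item \emph{Large cubes}, with $\ell_{x_j}>K\ell_{x_k}$. For these I would use the cone argument. Each such $Q_{x_j}$ meets $Q_{x_k}\subset B(x_k,C\ell_{x_k})$ but does not contain $x_k$, although it contains $B(x_j,c\ell_{x_j})$. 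So the direction $(x_j-x_k)/|x_j-x_k|$ is determined up to an error that is small when $K$ is large.

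Now take two large cubes with $j<i<k$ whose directions are within a small angle $\theta(n,r)$ of each other. Compare distances: both $|x_j-x_k|$ and $|x_i-x_k|$ lie between $c\ell$ and roughly $C\ell$ for the respective side lengths. This, combined with $x_i\notin Q_{x_j}$ and the fatness ball $B(x_j,c\ell_{x_j})$, gives a contradiction once $\theta$ is small and $K$ is large.

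Hence the directions of large cubes are $\theta$-separated, and there are at most $M_2(n,r)$ of them.
\end{enumerate}

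Getting the constants in (ii) right is the delicate point. The lopsidedness $r<1$ matters here: it is what keeps $B(x,c\ell_x)$ inside $Q_x$, and hence what lets the argument survive. If necessary I would quote this step from \cite{G75} rather than redo it in full. Then $M=M_1+M_2$ completes the proof.
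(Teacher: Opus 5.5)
\paragraph{The gap is in step (ii).} Your reduction, the greedy selection, the disjointness of the balls $B(x_k,\frac c3\ell_{x_k})$, the covering argument, the colouring step and the volume count in (i) are all correct. This is the Besicovitch--Morse scheme. The paper does not reproduce it; it simply quotes Morse's theorem from Guzm\'an's book, using that each $Q_x$ is convex, contains $B(x,\frac{1-r}{2}\ell_x)$ and lies in $B(x,\sqrt n\,\ell_x)$.

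Your claim that two earlier large cubes meeting $Q_{x_k}$ cannot have directions within $\theta$ of each other is false, for every $\theta$ and every $K$. Take $n=1$, $r=\frac12$ and $L>\max(K,3)$, and set
\begin{itemize}
\item $Q_{x_k}=[-\frac12,\frac12]$ with $x_k=0$;
\item $Q_{x_j}=[\frac14,L+\frac14]$ with $x_j=\frac L2$;
\item $Q_{x_i}=[\frac14,\frac32L+\frac14]$ with $x_i=\frac{11}{10}L$.
\end{itemize}
Then $x\in rQ_x$ in all three cases, $x_i\notin Q_{x_j}$, and $0\notin Q_{x_j}\cup Q_{x_i}$. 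For $A=\{x_j,x_i,0\}$ your rule may pick $x_1=x_j$ (since $L>\frac12\cdot\frac32L$), then $x_2=x_i$, then $x_3=0$. So $Q_{x_3}$ meets two earlier large cubes lying in exactly the same direction, and every inequality you list in (ii) holds. For $n=2$ and $r$ close to $1$ the same happens with two cubes of equal side, so tightening the constant $\frac12$ in the selection rule does not help.

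The cone argument is only one-sided. By convexity, $Q_{x_j}$ contains the convex hull of $B(x_j,c\ell_{x_j})$ and a point of $Q_{x_k}$. This forces $x_i\in Q_{x_j}$ when $|x_i-x_k|\le|x_j-x_k|$, but says nothing when the later point is farther out.

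\paragraph{How to repair it.} What is true, and suffices, is that each narrow cone contains boundedly many large cubes. Put $x_k=0$. For $K\ge 1$, every large $Q_{x_j}$ meeting $Q_{x_k}$ satisfies $c\ell_{x_j}\le|x_j|\le 2C\ell_{x_j}$.

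Take $\theta=c_0:=\frac{c}{5C}$ and $K\ge 4C^2/c^2$. The convex-hull argument, together with the ball $B(x_j,c\ell_{x_j})$ itself, shows that $Q_{x_j}$ contains every $y$ satisfying both $\bigl|\frac{y}{|y|}-\frac{x_j}{|x_j|}\bigr|\le\theta$ and $\frac{4C^2}{c}\ell_{x_k}<|y|\le(1+c_0)|x_j|$.

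Now let $j_1<\dots<j_s<k$ index the large cubes whose points lie in one cone of aperture $\theta$. The containment above gives $|x_{j_{q+1}}|>(1+c_0)|x_{j_q}|$ for each $q$. The selection rule gives
$$|x_{j_s}|\le 2C\ell_{x_{j_s}}<4C\ell_{x_{j_1}}\le\frac{4C}{c}|x_{j_1}|.$$
Hence $s\le 1+\log(4C/c)/\log(1+c_0)$. Covering the sphere by $N(n,\theta)$ such cones then gives $M_2(n,r)$.

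With this replacement for (ii), your proof is complete. Alternatively, cite Morse's theorem for the whole statement, as the paper does.
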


\section{On the ${\mathcal A}_{\infty}$ condition}
Denote by ${\mathcal Q}$ the family of all cubes in ${\mathbb R}^n$ with sides parallel to the axes.
In this section we will prove the following result.

\begin{theorem}\label{impl} Assume that $p(\cdot)\in {\mathcal A_{\infty}}$. Then there exist $\eta,\d\in (0,1)$ and a function $b:{\mathcal Q}\to [0,1]$ such that for every cube $Q$
and any measurable subset $E_Q\subset Q$ with $|E_Q|\ge \eta|Q|$, for all $t\in \big(0, 1/\|\chi_Q\|_{L^{p(\cdot)}}\big]$ we have
\begin{equation}\label{sainf}
\int_Qt^{p(x)}dx\le 2\Big(\int_{E_Q}t^{p(x)}dx+t^{\d}b(Q)\chi_{(0,1)}(t)\Big),
\end{equation}
and, moreover, $\sum_{Q\in {\mathcal F}}b(Q)\le 1$ for every family of pairwise disjoint cubes ${\mathcal F}$.
\end{theorem}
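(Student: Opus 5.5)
The plan is to first upgrade the norm inequality of Definition~\ref{ainfty} into a \emph{modular} inequality with constant arbitrarily close to $1$ on large subsets. Then $b(Q)$ is defined as the smallest ``error'' that makes \eqref{sainf} true, and the bound $\sum_{Q\in\mathcal F}b(Q)\le 1$ is obtained by contradiction, using the $\mathcal A_\infty$ condition on the whole family $\mathcal F$.

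\emph{Step 1 (modular form and constant near $1$).} Fix $\la, C$ from Definition~\ref{ainfty}. Suppose that for a disjoint family, numbers $t_Q$ and sets $E_Q$ satisfy
$$\sum_Q\int_Q t_Q^{p(x)}\,dx \approx 1 .$$
Then Proposition~\ref{modest}, with $p_+<\infty$, converts the norm inequality into
$$\sum_Q\int_Q t_Q^{p}\le C_1\Big(\sum_Q\int_{E_Q}t_Q^{p}\Big)^{\gamma}$$
for some $\gamma>0$ depending only on $p_\pm$. The factor $C_1$ is too large to reach the constant $2$ in \eqref{sainf}. To improve it, I would exploit that $|Q\setminus E_Q|\le(1-\eta)|Q|$ with $\eta$ close to $1$. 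Partition $Q$ (or choose subsets) so that $Q\setminus E_Q$ is contained in many disjoint-in-position ``test sets'': with $k$ roughly $\lambda/(1-\eta)$, one can find sets $F_Q^1,\dots,F_Q^k\subset E_Q$ such that each $(Q\setminus E_Q)\cup F^i_Q$ is small relative to $Q$ while each $F_Q^i\cup(Q\setminus E_Q)$ has complement of measure $\ge\la|Q|$. Applying the $\mathcal A_\infty$ inequality to each complement and averaging over $i$ should show that the modular mass on $Q\setminus E_Q$ is at most $\frac{C_1}{k}$ times the mass on $E_Q$, up to the power $\gamma$. Choosing $\eta$ close to $1$ then makes this $\le 1$, i.e.
$$\int_Q t^p\le 2\int_{E_Q}t^p$$
whenever the normalization is right.

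\emph{Step 2 (definition of $b$).} For a cube $Q$, call a pair $(t,E)$ \emph{bad} if $t\in(0,\min(1,1/\|\chi_Q\|_{L^{p(\cdot)}}))$, $E\subset Q$ with $|E|\ge\eta|Q|$, and $\int_Qt^p>2\int_Et^p$. Set
$$b(Q):=\min\Big(1,\ \sup_{\text{bad }(t,E)}\ t^{-\d}\Big(\tfrac12\int_Qt^p-\int_Et^p\Big)\Big),$$
and $b(Q)=0$ if there is no bad pair. Then \eqref{sainf} holds by construction for $t<1$. For $t\ge1$, the restriction $t\le1/\|\chi_Q\|$ forces $\|t\chi_Q\|\le1$, and the Step~1 argument applied to a single cube gives \eqref{sainf} with no error term; this is where the $\chi_{(0,1)}(t)$ factor comes from.

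\emph{Step 3 ($\sum b\le1$, the main obstacle).} Suppose $\sum_{Q\in\mathcal F}b(Q)>1$ for some disjoint $\mathcal F$, pass to a finite subfamily, and pick bad pairs $(t_Q,E_Q)$ nearly attaining $b(Q)$. The difficulty is normalization. The individual $t_Q$ may be tiny, so the total modular $\sum_Q\int_Qt_Q^p$ can be much smaller than $1$, and Proposition~\ref{modest} then loses powers. The exponent $\d$ is meant to absorb this. Since $t_Q<1$, we have $t_Q^{p(x)}\le t_Q^{\d}$ once $\d\le p_-$, so the weight $t^{-\d}$ compensates for this loss. I would first try $\d=p_-/2$, or a small multiple of $p_-/p_+$. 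The goal is to show that the chosen bad data produce a function whose total modular on $\bigcup Q$ is comparable to $1$, while the bad inequalities give mass on $\bigcup E_Q$ at most half of it. This contradicts Step~1 applied to the whole family. Making the exponents $\gamma$ and $\d$ compatible in this bookkeeping is the step I expect to require the most care.
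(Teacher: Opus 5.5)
The gap is in Step~1, and Steps~2 and~3 both rest on it. The only instances of Definition~\ref{ainfty} you use are for the cube $Q$ itself (or the given family $\mathcal F$) with various subsets $G\subset Q$. Since $t$ factors out of the norm, for one cube these say exactly $\|\chi_Q\|_{L^{p(\cdot)}}\le C\|\chi_G\|_{L^{p(\cdot)}}$ whenever $|G|\ge\la|Q|$. That is a lower bound on the mass of large subsets. Averaging such bounds over $G=E_Q\setminus F_Q^i$ still gives only lower bounds, so it can never produce an upper bound on $\int_{Q\setminus E_Q}t^{p(x)}\,dx$ that decays as $|Q\setminus E_Q|/|Q|\to 0$.

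Here is a concrete obstruction. Take $1\le p_2<p_1$, $K>1$ and $s\le\la/2$. Let $p=p_1$ on $S\subset Q$ with $|S|=s|Q|$, and $p=p_2$ on $Q\setminus S$. Choose $t>1$ and $|Q|$ so that $\int_S t^{p}=\frac{K}{K+1}$ and $\int_{Q\setminus S}t^{p}=\frac{1}{K+1}$; then $t=1/\|\chi_Q\|_{L^{p(\cdot)}}$. We have $\|\chi_Q\|=((K+1)(1-s)|Q|)^{1/p_2}$, while $\|\chi_G\|\ge|G\setminus S|^{1/p_2}\ge(\la|Q|/2)^{1/p_2}$. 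So the single-cube test holds with constant $(2(K+1)/\la)^{1/p_2}$, independent of $s$. Yet $\int_Q t^{p}=1>\frac{2}{K+1}=2\int_{Q\setminus S}t^{p}$, however close $1-s$ is to $1$, and this happens at exactly the "right normalization". Hence your Step~2 claim that the case $t\ge1$ needs no error term cannot come from tests on $Q$ alone. The contradiction sought in Step~3 also has nothing to rest on. Such configurations are excluded by $\mathcal A_\infty$ only through its action on subcubes of $Q$.

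The missing idea is self-improvement across scales. Applying $\mathcal A_\infty$ to the Calder\'on--Zygmund cubes of $t^{p(\cdot)}$ inside $Q$, at all levels, yields Diening's reverse H\"older property $RH$, and the paper's proof is built on it.
\begin{itemize}
\item From $RH$ comes Lemma~\ref{equi}. There $b(Q)$ is defined through the extremal $t_Q$ at which $|Q|(\frac{1}{|Q|}\int_Q t_Q^{rp})^{1/r}=k\int_Q t_Q^{p}$. Summability of $b$ then follows by applying $RH$ to the family $\{t_Q\}$. This is precisely what resolves the normalization problem you leave open in Step~3.
\item Lemma~\ref{RH} gives a genuine reverse H\"older inequality for $t$ between $1$ and $\|\chi_Q\|^{-1-\e}$. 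It uses $\mathcal A_\infty$ on the sets $\{p\ge p_\la\}$ and $\{p\le p_{1-\la}\}$ to show $\|\chi_Q\|^{p_\la}\lesssim|Q|$.
\item Combining the two, via $t\mapsto t^{1/(1+\e)}$ and H\"older (which is where $\d=\frac{\e}{1+\e}p_-$ comes from), gives $|Q|(\frac{1}{|Q|}\int_Q t^{\ga p})^{1/\ga}\le A(\int_Q t^{p}+t^{\d}b(Q)\chi_{(0,1)}(t))$.
\item Only then does H\"older on $Q\setminus E_Q$, with $A(1-\eta)^{1/\ga'}=1/2$, produce the constant $2$.
\end{itemize}
Without a reverse H\"older ingredient of this kind, no choice of $\eta$ or $\d$ in your scheme closes the argument.
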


This result is implicitly contained in \cite[Lemma 4.1]{L17}. There, the author considers the weighted $L^{p(\cdot)}(w)$ spaces and the result is proved under a stronger assumption than
$p(\cdot)\in {\mathcal A_{\infty}}$. However, with minor changes the same works for $p(\cdot)\in {\mathcal A_{\infty}}$.
Taking $w=1$  in \cite[Lemma 4.1]{L17}, one can easily derive Theorem \ref{impl} from the resulting statement.
Since the unweighted case, which we consider here, makes the argument simpler, and since Theorem \ref{impl} plays a key role in deriving Theorem~\ref{boundml}, we outline the main steps below for the reader’s convenience.

An important ingredient of the proof is the following reverse H\"older property.

\begin{definition}\label{d3}
We say that $p(\cdot)\in RH$ if there exist $r>1$ and $C>0$ such that for every family $\mathcal S$ of pairwise disjoint cubes
and any non-negative sequence $\{t_Q\}_{Q\in {\mathcal S}}$,
$$\sum_{Q\in {\mathcal S}}\int_Qt_Q^{p(x)}dx\le 1\Rightarrow \sum_{Q\in {\mathcal S}}|Q|\left(\frac{1}{|Q|}\int_Qt_Q^{rp(x)}dx\right)^{1/r}\le C.$$
\end{definition}

It was shown by Diening \cite[Th. 5.6]{D05} that ${\mathcal A}_{\infty}\Rightarrow RH$. A somewhat simplified proof can be also found in \cite[Lemma 5.1]{L17}.
Observe that in both papers the ${\mathcal A}_{\infty}$ condition is used with $\la=1/2$ but the proof works for every $\la\in (0,1)$ with an obvious change which we outline below.

For $Q\in {\mathcal S}$ denote $v_Q(x):=t_Q^{p(x)}, \a_Q:=\langle v_Q\rangle_Q$, and let
$$\O_k(Q):=\Big\{x\in Q:M_Q^dv_Q>\Big(\frac{2^n}{1-\la}\Big)^k\a_Q\Big\},$$
where $M_Q^d$ denotes the dyadic maximal operator restricted to $Q$. By the Calder\'on--Zygmund decomposition, one can write $\O_k(Q)=\cup_jP_j^k(Q)$, where
$P_j^k(Q)$ are pairwise disjoint cubes such that, setting $E_j^k(Q):=P_j^k(Q)\setminus \O_{k+1}(Q)$, we have $|E_j^k(Q)|\ge \la|P_j^k(Q)|$.
Thus, by the ${\mathcal A}_{\infty}$ condition applied to the family ${\mathcal F}:=\{P_j^k(Q), Q\in {\mathcal S}\}$ we obtain
$$\big\|\sum_{Q\in {\mathcal S}}\sum_jt_Q\chi_{P_j^k(Q)}\big\|_{L^{p(\cdot)}}\le C\big\|\sum_{Q\in {\mathcal S}}\sum_jt_Q\chi_{E_j^k(Q)}\big\|_{L^{p(\cdot)}},$$
and therefore,
$$\big\|\sum_{Q\in {\mathcal S}}t_Q\chi_{\O_k(Q)}\big\|_{L^{p(\cdot)}}\le C\big\|\sum_{Q\in {\mathcal S}}t_Q\chi_{\O_k(Q)\setminus \O_{k+1}(Q)}\big\|_{L^{p(\cdot)}}.$$
From this point on, the proof is identical to that in \cite[Lemma 5.1]{L17}.

\begin{lemma}\label{equi} Suppose that $p(\cdot)\in RH$. Then there exist $r,k>1$ and a function $b:{\mathcal Q}\to [0,2]$
such that the following holds: if $\int_Qt^{p(x)}dx\le 1$, then
\begin{equation}\label{eqb}
|Q|\left(\frac{1}{|Q|}\int_Qt^{rp(x)}dx\right)^{1/r}\le k\Big(\int_Qt^{p(x)}dx+b(Q)\Big),
\end{equation}
and, moreover, $\sum_{Q\in {\mathcal F}}b(Q)\le 2$ for every family of pairwise disjoint cubes ${\mathcal F}$.
\end{lemma}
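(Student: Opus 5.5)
The plan is to define $b(Q)$ directly as a normalized ``defect'': the largest value the left-hand side of \eqref{eqb} can take over those $t$ for which the simple bound $|Q|(\frac{1}{|Q|}\int_Qt^{rp(x)}dx)^{1/r}\le k\int_Qt^{p(x)}dx$ fails. The summability of $b$ over disjoint families will then follow from the $RH$ condition, after observing that ``bad'' values of $t$ force the total modular to be small.

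Let $r>1$ and $C>0$ be the constants in Definition \ref{d3}, and set $k:=4C$. For a cube $Q$ and $t\ge 0$ write
$$L_Q(t):=|Q|\Big(\frac{1}{|Q|}\int_Qt^{rp(x)}dx\Big)^{1/r},\qquad \varrho_Q(t):=\int_Qt^{p(x)}dx.$$
Call $t$ \emph{bad for $Q$} if $\varrho_Q(t)\le 1$ and $L_Q(t)>k\varrho_Q(t)$. Define
$$b(Q):=\frac1C\sup\{L_Q(t): t \text{ bad for } Q\},$$
with $b(Q):=0$ if no bad $t$ exists. Applying $RH$ to the one-cube family $\{Q\}$ gives $L_Q(t)\le C$ whenever $\varrho_Q(t)\le1$, so $b(Q)\le 1\le 2$. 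Inequality \eqref{eqb} is then immediate. If $t$ is not bad, then $L_Q(t)\le k\varrho_Q(t)$. If $t$ is bad, then $L_Q(t)\le Cb(Q)\le kb(Q)$.

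It remains to show that $\sum_{Q\in\mathcal F}b(Q)\le 2$ for a disjoint family $\mathcal F$. It suffices to treat finite subfamilies on which $b>0$. For each such $Q$ choose a bad $t_Q$ with $L_Q(t_Q)\ge \frac{C}{1+\e}b(Q)$, and put $S:=\sum_Q\varrho_Q(t_Q)$. Since each $\varrho_Q(t_Q)\le1$, a greedy grouping splits the family into at most $2S+1$ subfamilies $G_i$, each with $\sum_{G_i}\varrho_Q(t_Q)\le1$: the sums of consecutive groups exceed $1$. Applying $RH$ to each group gives $\sum_Q L_Q(t_Q)\le C(2S+1)$. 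Badness gives $\sum_QL_Q(t_Q)>kS=4CS$. Hence $4CS<C(2S+1)$, so $S<1/2$. Then the whole family is admissible in $RH$, and
$$\frac{C}{1+\e}\sum_Qb(Q)\le \sum_QL_Q(t_Q)\le C.$$
Letting $\e\to0$ yields $\sum b(Q)\le1\le2$.

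The main obstacle is this summability step. A naive choice of near-extremal $t_Q$ need not satisfy the global constraint $\sum\varrho_Q(t_Q)\le1$ required in $RH$. The point of taking $k$ large (here $k=4C$) in the definition of badness is that it forces this constraint automatically, via the grouping argument above.
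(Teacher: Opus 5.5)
Your proof is correct and gives slightly more than stated: $b\le 1$, $\sum_{Q\in\mathcal F}b(Q)\le 1$, and $k=4C$ (enlarge $C$ first if needed so that $k>1$). The overall idea is the same as the paper's. In both, $b(Q)$ records the worst value of the left-hand side of \eqref{eqb} over the bad parameters. Summability then comes from applying $RH$ to extremal bad parameters, whose total modular badness forces to be small.

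The execution differs in two places.

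\textbf{The paper's route.} It takes $t_Q$ to be the supremum of the bad $t$. It uses continuity in $t$ to obtain the equality $L_Q(t_Q)=k\varrho_Q(t_Q)$, sets $b(Q)=L_Q(t_Q)$, and controls every bad $t$ by monotonicity of $L_Q$. This gives an exact extremizer, so no approximation is needed, and $b(Q)$ is a fixed multiple of the modular $\varrho_Q(t_Q)$. The cost is twofold. It needs continuity of $\varrho_Q$ and $L_Q$ in $t$, and hence $p_+<\infty$. Its constant $k=2^{1+p_+/p_-}C$ depends on $p_+/p_-$. That dependence presumably comes from a dilation $t\mapsto 2^{-1/p_-}t$ that brings a subfamily back under the normalization $\sum\varrho\le 1$ required in $RH$; the details are deferred to the earlier work.

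\textbf{Your route.} You take the supremum of the values $L_Q(t)$ rather than of the parameters $t$. This makes \eqref{eqb} immediate and needs neither continuity nor $p_+<\infty$. You then replace the dilation by the greedy splitting into at most $2S+1$ admissible groups. This bounds $\sum_Q L_Q(t_Q)$ linearly in $S$, so any $k\ge 3C$ already forces $S<1$, and the constant is independent of $p_\pm$. The only price is the near-extremal choice of $t_Q$ and the limit $\varepsilon\to 0$.
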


This lemma is contained implicitly in Diening's work \cite{D05}. In an explicit form it can be found in \cite[Lemma 5.2]{L17} (one should only take $w=1$ there).
We mention how to define a function $b$. Let $r>1$ and $C>0$ be the constants from the $RH$-property. Set $k:=2^{1+\frac{p_+}{p_-}}C$. Now for every cube $Q$ define
$t_Q$ to be the supremum of all $t>0$ such that $\int_Qt^{p(x)}dx\le 1$ and
$$|Q|\left(\frac{1}{|Q|}\int_Qt^{rp(x)}dx\right)^{1/r}>k\int_Qt^{p(x)}dx.$$
One shows that for $t=t_Q$, the latter inequality becomes an equality, and we set
$$b(Q):=|Q|\left(\frac{1}{|Q|}\int_Qt_Q^{rp(x)}dx\right)^{1/r}.$$
We refer to \cite[Lemma 5.2]{L17} for further details.

\begin{lemma}\label{RH}
Assume that $p(\cdot)\in {\mathcal A}_{\infty}$. There exist $\ga,C>1$ and $\e>0$ such that
if
$$t\in \big[\min\big(1,1/\|\chi_Q\|^{1+\e}_{L^{p(\cdot)}}\big), \max\big(1,1/\|\chi_Q\|^{1+\e}_{L^{p(\cdot)}}\big)\big],$$
then
$$
\left(\frac{1}{|Q|}\int_Qt^{\ga p(x)}dx\right)^{1/\ga}\le C\frac{1}{|Q|}\int_Qt^{p(x)}dx.
$$
\end{lemma}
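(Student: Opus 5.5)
Since $p(\cdot)\in\mathcal A_\infty$ implies $p(\cdot)\in RH$ (Diening, \cite[Th.~5.6]{D05}, with the modification outlined above for general $\la$), the natural route is to apply Lemma \ref{equi} to a single cube $Q$. Let $r,k>1$ and $b$ be as in that lemma. For $t\le 1/\|\chi_Q\|_{L^{p(\cdot)}}$ we have $\int_Q t^{p(x)}dx\le 1$, and the inequality in Lemma \ref{equi} gives
$$\Big(\frac1{|Q|}\int_Qt^{rp(x)}dx\Big)^{1/r}\le k\Big(\frac1{|Q|}\int_Qt^{p(x)}dx+\frac{b(Q)}{|Q|}\Big).$$
The task is then to show that in the stated range of $t$ the term $b(Q)/|Q|$ is dominated by $\frac1{|Q|}\int_Q t^{p(x)}dx$. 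We take $\ga=r$ and will choose $\e$ small.

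Fix $Q$ and write $s:=\|\chi_Q\|_{L^{p(\cdot)}}$. There are two cases.

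\emph{Case $s\ge 1$.} Then $t\in[s^{-1-\e},1]$, and by Proposition \ref{modest} we have $|Q|\ge s^{p_-}$. Hence
$$\int_Qt^{p}\ge t^{p_+}|Q|\ge s^{-(1+\e)p_+}|Q|.$$
The difficulty is that $b(Q)\le2$ is only a uniform bound, and this lower bound can be small when $p_+>p_-$. The extra $\e$ cannot rescue this directly. We therefore need finer information about $b(Q)$ coming from its definition through $t_Q$. By construction, for $t>t_Q$ in the admissible range (that is, with $\int_Q t^p\le1$), the reverse Hölder inequality holds \emph{without} $b$, with constant $k$. So the real claim is that $t_Q\le s^{-1-\e}$ when $s\ge1$, or more generally that $t_Q$ is below the lower endpoint of the interval.

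To obtain this, we argue by contradiction. If $t_Q$ were in or above the range, then at $t=t_Q$ the failure of reverse Hölder with constant $k$ would mean that $t^{p}$ concentrates on a small part of $Q$. Using $\mathcal A_\infty$ with $E_Q$ the set where $p$ is small (or large), we would obtain a comparison between $\|t\chi_Q\|$ and $\|t\chi_{E_Q}\|$ which contradicts the normalization $t\approx 1/s$ once the exponent is perturbed by $\e$. Here $\e$ is chosen depending on $p_\pm$, $\la$ and the $\mathcal A_\infty$ constant, so that $s^{\e}$ beats the loss $s^{p_+/p_--1}$-type factors.

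\emph{Case $s<1$.} This is symmetric. Here $t\in[1,s^{-1-\e}]$, and we should restrict to $t\le 1/s$ where $\int_Q t^p\le 1$. For $t\in(1/s,s^{-1-\e}]$ we handle the range by scaling: write $t=\tau\cdot(1/s)$ with $\tau\le s^{-\e}$. Then $t^{\ga p}\le \tau^{\ga p_+}(1/s)^{\ga p}$ and $t^{p}\ge\tau^{p_-}(1/s)^{p}$. The resulting loss $\tau^{p_+-p_-}\le s^{-\e(p_+-p_-)}$ must be absorbed. Since $s<1$ means $Q$ is small, we instead need to use the local $\mathcal A_\infty$ information, namely that $p$ oscillates little on $Q$ in the sense $s^{-(p_+(Q)-p_-(Q))}\lesssim1$, which plays the role of a log-Hölder-type consequence. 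With this in hand the loss is bounded.

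The main obstacle is the contradiction step: showing that $\mathcal A_\infty$ forces $t_Q$, or equivalently the reverse Hölder failure point, outside the interval $[\min(1,s^{-1-\e}),\max(1,s^{-1-\e})]$. This requires extracting from $\mathcal A_\infty$, applied to single cubes with $E_Q$ a level set of $p$, an estimate of the form $s^{-(p_+(Q)-p_-(Q))}\le C$ in a quantitative way. That is where the choice of $\e$ enters.
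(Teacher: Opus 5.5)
Your proposal has a genuine gap. The step that carries all the weight is showing by contradiction that $t_Q$ lies below $s^{-1-\e}$, where $s:=\|\chi_Q\|_{L^{p(\cdot)}}$. That amounts to reverse H\"older with the original exponent $r$ and the fixed constant $k$ on that range, which is stronger than the lemma itself, and you never actually argue it. Worse, the quantitative input you say it needs is false. For $|Q|\le 1$, Proposition \ref{modest} gives $|Q|^{1/p_-(Q)}\le s\le |Q|^{1/p_+(Q)}$. So a bound $s^{-(p_+(Q)-p_-(Q))}\le C$ (or $s^{-\e(p_+(Q)-p_-(Q))}\le C$) on small cubes is exactly local log-H\"older continuity of $p$. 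This is not implied by $\mathcal A_\infty$. Indeed, $\mathcal P\subset\mathcal A_\infty$ by the elementary necessity argument, and there are exponents in $\mathcal P$ that are discontinuous at a point. For such $p$, $p_+(Q)-p_-(Q)$ stays bounded below on all small cubes around that point while $s\to 0$. Working with $p_\pm(Q)$ is simply too crude, and the same defect breaks your scaling step on $(1/s,s^{-1-\e}]$ when $s<1$. There is also an untreated range: when $s\ge 1$ the interval $[s^{-1-\e},1]$ contains $(1/s,1]$, where $\int_Q t^{p(x)}dx>1$ and Lemma \ref{equi} is not available. Finally, taking $\ga=r$ discards exactly the room the proof needs. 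Note also that no small $\e$ makes $s^{\e}$ dominate $s^{p_+/p_--1}$ for large $s$.

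The paper's proof does not use $b(Q)$ at all. It rests on two ideas you are missing. The first is pivoting at the quantile $p_\la:=(p\chi_Q)^*(\la|Q|)$ rather than at $p_\pm(Q)$. The second is trading integrability for range by taking $\ga<r$ with $\e=r/\ga-1$.

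Here is how it goes for $s\ge 1$, so that $t\le 1$. The set $E_\la=\{x\in Q: p(x)\le p_\la\}$ has measure at least $(1-\la)|Q|$, and on it $t^{p_\la}\le t^{p(x)}$; hence $t^{p_\la}\le \frac{1}{1-\la}\langle t^{p(\cdot)}\rangle_Q$. Since $t\ge s^{-(1+\e)}$ and $(1+\e)\ga=r$, one gets $t^{\ga(p(x)-p_\la)}\le 1+s^{r(p_\la-p(x))}$. Next apply the $RH$ property to the single cube $Q$ with $t_Q=1/s$, which is admissible since $\int_Q s^{-p(x)}dx\le 1$. This yields $\frac{1}{|Q|}\int_Q s^{-rp(x)}dx\le C|Q|^{-r}$, and everything reduces to $s^{p_\la}\le C|Q|$.

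Here $\mathcal A_\infty$ is used only at the quantile level. Let $\O_\la=\{x\in Q: p(x)\ge p_\la\}$, so $|\O_\la|\ge\la|Q|$. Then $s\le C\|\chi_{\O_\la}\|_{L^{p(\cdot)}}\le C|Q|^{1/p_\la}$ by Proposition \ref{modest}, for $|Q|>1/\la$; smaller cubes are trivial. This bound holds for every $\mathcal A_\infty$ exponent, unlike the oscillation bound you need. The case $s\le 1$ is symmetric, with $1-\la$ in place of $\la$ and the roles of $E$ and $\O$ exchanged.
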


This is an analogue of \cite[Lemma 5.3]{L17}. Now the argument requires some simple modifications, and hence we give a complete proof below.

\begin{proof}[Proof of Lemma \ref{RH}] Consider first the case when $\|\chi_Q\|_{L^{p(\cdot)}}\ge 1$. Then $t\le 1$.
Let $\la\in (0,1)$ be a constant from the definition of ${\mathcal A}_{\infty}$. Denote
$$p_{\la}:=(p\chi_Q)^*(\la|Q|).$$

It suffices to show that
\begin{equation}\label{suf}
\left(\frac{1}{|Q|}\int_Qt^{\ga p(x)}dx\right)^{1/\ga}\le Ct^{p_{\la}}.
\end{equation}
Indeed, setting $E_{\la}:=\{x\in Q:p(x)\le p_{\la}\}$, we have $|E_{\la}|\ge (1-\la)|Q|$, and so
$$t^{p_{\la}}\le \frac{1}{1-\la}\frac{1}{|Q|}\int_{E_{\la}}t^{p(x)}dx\le \frac{1}{1-\la}\frac{1}{|Q|}\int_Qt^{p(x)}dx.$$

We will use that ${\mathcal A}_{\infty}\Rightarrow RH$. Let $r>1$ be a constant from the $RH$-property. Take $\ga\in (1,r)$ and let $\e:=\frac{r}{\ga}-1$.
We have
$$
\left(\frac{1}{|Q|}\int_Qt^{\ga p(x)}dx\right)^{1/\ga}=\left(\frac{1}{|Q|}\int_Qt^{\ga(p(x)-p_{\la})}dx\right)^{1/\ga}t^{p_{\la}}.
$$
Next, since $t\in [1/\|\chi_Q\|_{L^{p(\cdot)}}^{1+\e},1]$,
\begin{equation}\label{t}
t^{\ga(p(x)-p_{\la})}\le 1+\|\chi_Q\|_{L^{p(\cdot)}}^{r(p_{\la}-p(x))},
\end{equation}
which, along with the previous estimate, implies
\begin{eqnarray*}
&&\left(\frac{1}{|Q|}\int_Qt^{\ga p(x)}dx\right)^{1/\ga}\\
&&\le t^{p_{\la}}\left(1+\|\chi_Q\|_{L^{p(\cdot)}}^{\frac{r}{\ga}p_{\la}}\Big(\frac{1}{|Q|}\int_Q\Big(\frac{1}{\|\chi_Q\|_{L^{p(\cdot)}}}\Big)^{rp(x)}dx\Big)^{1/\ga}\right).
\end{eqnarray*}

Since $\int_Q\Big(\frac{1}{\|\chi_Q\|_{L^{p(\cdot)}}}\Big)^{p(x)}dx\le 1$, the $RH$-property implies
$$\frac{1}{|Q|}\int_Q\Big(\frac{1}{\|\chi_Q\|_{L^{p(\cdot)}}}\Big)^{rp(x)}dx\le \frac{C}{|Q|^r},$$
and hence
$$
\left(\frac{1}{|Q|}\int_Qt^{\ga p(x)}dx\right)^{1/\ga}\le Ct^{p_{\la}}\left(1+\Big(\|\chi_Q\|_{L^{p(\cdot)}}^{p_{\la}}/|Q|\Big)^{r/\ga}\right).
$$

Therefore, in order to prove (\ref{suf}), it remains to show that
\begin{equation}\label{rem}
\|\chi_Q\|_{L^{p(\cdot)}}^{p_{\la}}/|Q|\le C.
\end{equation}
Denote $\O_{\la}:=\{x\in Q:p(x)\ge p_{\la}\}$. Then $|\O_{\la}|\ge \la|Q|$, and hence, by the ${\mathcal A}_{\infty}$ condition,
$$\|\chi_Q\|_{L^{p(\cdot)}}\le C\|\chi_{\O_{\la}}\|_{L^{p(\cdot)}}.$$

Since $\|\chi_Q\|_{L^{p(\cdot)}}\ge 1$, we have $|Q|\ge 1$. If $|Q|\le 1/\la$, then (\ref{rem}) is trivial. Assume that $|Q|>1/\la$. Then $|\O_{\la}|\ge 1$ and so $\|\chi_{\O_{\la}}\|_{L^{p(\cdot)}}\ge 1$.
By Proposition \ref{modest},
$$\|\chi_{\O_{\la}}\|_{L^{p(\cdot)}}\le |\O_{\la}|^{1/p_-(\O_{\la})}\le |\O_{\la}|^{1/p_{\la}}\le |Q|^{1/p_{\la}},$$
which proves (\ref{rem}). This completes the proof when $\|\chi_Q\|_{L^{p(\cdot)}}\ge 1$.

Suppose now that $\|\chi_Q\|_{L^{p(\cdot)}}\le 1$. Then $t\ge 1$. The proof is essentially the same but with $\la$ changed to $1-\la$ and the reversed role of the sets $E_{1-\la}$ and $\O_{1-\la}$.

As before, it suffices to prove (\ref{suf}) but with $p_{1-\la}$ instead of $p_{\la}$. Indeed, since $t\ge 1$,
$$t^{p_{1-\la}}\le \frac{1}{1-\la}\frac{1}{|Q|}\int_{\O_{1-\la}}t^{p(x)}dx\le \frac{1}{1-\la}\frac{1}{|Q|}\int_Qt^{p(x)}dx.$$
Next, since $t\in [1,1/\|\chi_Q\|_{L^{p(\cdot)}}^{1+\e}]$, the full analogue of (\ref{t}) (with $p_{1-\la}$) holds. The rest of the proof until (\ref{rem}) is the same, and instead of (\ref{rem}) we have to show that
\begin{equation}\label{rem1}
\|\chi_Q\|_{L^{p(\cdot)}}^{p_{1-\la}}/|Q|\le C.
\end{equation}
Since $|E_{1-\la}|\ge \la|Q|$,  by the ${\mathcal A}_{\infty}$ condition,
$$\|\chi_Q\|_{L^{p(\cdot)}}\le C\|\chi_{E_{1-\la}}\|_{L^{p(\cdot)}}.$$
Next, since $\|\chi_{E_{1-\la}}\|_{L^{p(\cdot)}}\le 1$, by Proposition \ref{modest},
$$\|\chi_{E_{1-\la}}\|_{L^{p(\cdot)}}\le |E_{1-\la}|^{1/p_+(E_{1-\la})}\le |E_{1-\la}|^{1/p_{1-\la}}\le |Q|^{1/p_{1-\la}},$$
which proves (\ref{rem1}), and this finishes the proof.
\end{proof}

\begin{proof}[Proof of Theorem \ref{impl}] Let us show that there exist $\ga>1$ and $A,\d>0$ such that
\begin{equation}\label{lus}
|Q|\Big(\frac{1}{|Q|}\int_Qt^{\ga p(x)}dx\Big)^{1/\ga}\le A\Big(\int_{Q}t^{p(x)}dx+t^{\d}b(Q)\chi_{(0,1)}(t)\Big)
\end{equation}
for all $t\le 1/\|\chi_Q\|_{L^{p(\cdot)}}$.

If $t\ge 1$, then (\ref{lus}) follows from Lemma \ref{RH} (since in this case $\|\chi_Q\|_{L^{p(\cdot)}}\le 1$ and hence $t\in [1, 1/\|\chi_Q\|_{L^{p(\cdot)}}^{1+\e}]$ for any $\e>0$).
Therefore, assume that $t<1$.

Let $C,\ga,\e$ be the constants from Lemma \ref{RH}. Let $A>C$ which will be chosen later on.

Consider the case when
\begin{equation}\label{sup}
\frac{1}{|Q|}\int_Qt^{p(x)}dx\le \frac{1}{A}\Big(\frac{1}{|Q|}\int_Qt^{\ga p(x)}dx\Big)^{1/\ga}.
\end{equation}
Then the inequality claimed in Lemma \ref{RH} does not hold which means that $t^{\frac{1}{1+\e}}\le \frac{1}{\|\chi_Q\|_{L^{p(\cdot)}}}$.
Hence, by Lemma \ref{equi},
$$
|Q|\left(\frac{1}{|Q|}\int_Qt^{\frac{r}{1+\e}p(x)}dx\right)^{1/r}\le k\Big(\int_Qt^{\frac{1}{1+\e}p(x)}dx+b(Q)\Big),
$$
By H\"older's inequality along with (\ref{sup}),
\begin{eqnarray*}
\int_Qt^{\frac{1}{1+\e}p(x)}dx&\le& |Q|\Big(\frac{1}{|Q|}\int_Qt^{p(x)}dx\Big)^{\frac{1}{1+\e}}\\
&\le& \frac{1}{A^{\frac{1}{1+\e}}}|Q|\Big(\frac{1}{|Q|}\int_Qt^{\ga p(x)}dx\Big)^{1/(1+\e)\ga}.
\end{eqnarray*}
Take now $A:=\max\big((2k)^{1+\e}, 2C\big)$. Then using that $(1+\e)\ga=r$ and combining the previous estimates, we obtain
$$
|Q|\left(\frac{1}{|Q|}\int_Qt^{\frac{r}{1+\e}p(x)}dx\right)^{1/r}\le 2kb(Q)\le A^{\frac{1}{1+\e}}b(Q),
$$
which implies
$$
|Q|\left(\frac{1}{|Q|}\int_Qt^{rp(x)}dx\right)^{1/r}\le A^{\frac{1}{1+\e}}t^{\frac{\e}{1+\e}p_-}b(Q).
$$

On the other hand, if (\ref{sup}) does not hold, then
$$
|Q|\Big(\frac{1}{|Q|}\int_Qt^{\ga p(x)}dx\Big)^{1/\ga}\le A\int_Qt^{p(x)}dx.
$$
Using that $\ga<r$, from both previous estimates and H\"older's inequality we obtain
$$
|Q|\Big(\frac{1}{|Q|}\int_Qt^{\ga p(x)}dx\Big)^{1/\ga}\le A\int_Qt^{p(x)}+A^{\frac{1}{1+\e}}t^{\frac{\e}{1+\e}p_-}b(Q),
$$
which proves (\ref{lus}).
\enlargethispage{5mm}
Take now $\eta\in (0,1)$ such that $A(1-\eta)^{1/\ga'}=1/2$. Suppose that $E\subset Q$ and $|E|\ge \eta|Q|$. By H\"older's inequality,
$$\int_{Q\setminus E}t^{p(x)}dx\le (1-\eta)^{1/\ga'}|Q|\Big(\frac{1}{|Q|}\int_Qt^{\ga p(x)}dx\Big)^{1/\ga}.$$
This, together with (\ref{lus}), implies
$$\int_Qt^{p(x)}dx\le \int_Et^{p(x)}dx+\frac{1}{2}\Big(\int_Qt^{p(x)}dx+t^{\d}b(Q)\chi_{(0,1)}(t)\Big),$$
from which
$$\int_Qt^{p(x)}dx\le 2\Big(\int_Et^{p(x)}dx+t^{\d}\frac{b(Q)}{2}\chi_{(0,1)}(t)\Big),$$
and the proof is complete.
\end{proof}

\section{Proof of Theorem \ref{boundml}}
It will be convenient to work with the following modification of $m_{t}$. For $r,\tau\in (0,1)$ define
$$m_{\tau, r}f(x):=\sup_{rQ\ni x}(f\chi_Q)^*(\tau|Q|),$$
where the supremum is taken over all cubes $Q$ such that $x\in rQ$.

Suppose that $x\in Q$. Then $x\in r(Q/r)$. Hence,
$$(f\chi_Q)^*(t|Q|)\le (f\chi_{Q/r})^*(tr^n|Q/r|)\le m_{tr^n,r}f(x),$$
and so, for every measurable $f$ and $r\in (0,1)$,
\begin{equation}\label{pes}
m_{t}f(x)\le m_{tr^n,r}f(x).
\end{equation}

\begin{proof}[Proof of Theorem \ref{boundml}] The sufficiency part of this theorem is easy. Indeed, suppose that $m_t$ is bounded on $L^{p(\cdot)}$ for some $t\in (0,1)$.
Take $\la\in (t,1)$. Let ${\mathcal F}$ be a family of pairwise disjoint cubes. Suppose that $E_Q\subset Q, |E_Q|\ge \la|Q|$ for $Q\in {\mathcal F}$. Since $(\chi_{E_Q})^*(t|Q|)=1$, we have
$$\sum_{Q\in {\mathcal F}}t_Q\chi_Q\le m_t\big(\sum_{Q\in F}t_Q\chi_{E_Q}\big).$$
From this and the boundedness of $m_t$ we obtain the ${\mathcal A}_{\infty}$ condition.

Turn now to the necessity part. Suppose that $p(\cdot)\in {\mathcal A}_{\infty}$.
Let $\la\in (0,1)$ be a constant from the ${\mathcal A}_{\infty}$ condition. Next, let $\eta\in (0,1)$ be a constant from Theorem \ref{impl}.
Set $\nu:=\max(\la,\eta)$. Take $t\in (\nu, 1)$, and let us show that $m_t$ is bounded on $L^{p(\cdot)}$.

Let $\|f\|_{L^{p(\cdot)}}=1$. Our goal is to show that there exists $C>0$ independent of $f$ such that
\begin{equation}\label{goal}
\int_{{\mathbb R}^n}\big(m_{t}f(x)\big)^{p(x)}dx\le C.
\end{equation}
Without loss of generality, one can assume that $f$ is bounded and with compact support (otherwise we apply the argument below to $\min(|f|,R)\chi_{\{|x|\le R\}}$
and then let $R\to \infty$).

Take $r\in (0,1)$ such that $\nu<tr^n-(1-r^n)$. By (\ref{pes}), it suffices to prove (\ref{goal}) with $m_{tr^n,r}f$ instead of $m_tf$.

Observe that
$$\|m_{tr^n,r}f\|_{L^{\infty}}\le \|f\|_{L^{\infty}}$$
and, by (\ref{embeq}),
$$\text{supp}(\,m_{tr^n,r}f)\subset \{M\chi_{\{\text{supp}\,(f)\}}>tr^n\}.$$
Therefore, $m_{tr^n,r}f$ is bounded and compactly supported.

For $\ga\in (0,1)$ which will be chosen later and for $k\in {\mathbb Z}$ denote
$$\O_k:=\{x\in {\mathbb R}^n: m_{tr^n,r}f(x)>\ga^k\}.$$
Then
\begin{eqnarray}
&&\int_{{\mathbb R}^n}m_{tr^n,r}f(x)^{p(x)}dx= \sum_{k\in {\mathbb Z}}\int_{\O_{k+1}\setminus \O_k}m_{tr^n,r}f(x)^{p(x)}dx\label{upes}\\
&&\le \sum_{k\in {\mathbb Z}}\int_{\O_{k+1}}\ga^{kp(x)}dx\le (1/\ga)^{p_+}\sum_{k\in {\mathbb Z}}\int_{\O_k}\ga^{kp(x)}dx.\nonumber
\end{eqnarray}

Thus, to complete the proof, it is enough to show that
\begin{equation}\label{ents}
\sum_{k\in {\mathbb Z}}\int_{\O_k}\ga^{kp(x)}dx\le C
\end{equation}
for some universal $C>0$. Let us show first that this series converges. Indeed, the argument in (\ref{upes}) can be reversed,
and, since $m_{tr^n,r}f$ is bounded and compactly supported,
\begin{equation}\label{conv}
\sum_{k\in {\mathbb Z}}\int_{\O_k}\ga^{kp(x)}dx\le C\int_{{\mathbb R}^n}m_{tr^n,r}f(x)^{p(x)}dx<\infty.
\end{equation}

To show (\ref{conv}), observe that for every $k\in {\mathbb Z}$,
\begin{eqnarray*}
\int_{\O_k}\ga^{kp(x)}dx&\le& \int_{\O_k\setminus \O_{k-1}}\ga^{kp(x)}dx+\ga^{p_-}\int_{\O_{k-1}}\ga^{(k-1)p(x)}dx\\
&\le& \int_{\O_k\setminus \O_{k-1}}m_{tr^n,r}f(x)^{p(x)}dx+\ga^{p_-}\int_{\O_{k-1}}\ga^{(k-1)p(x)}dx.
\end{eqnarray*}
From this, for every $M\in {\mathbb N}$,
\begin{eqnarray*}
(1-\ga^{p_-})\sum_{k=-M}^{M-1}\int_{\O_k}\ga^{kp(x)}dx&\le& \sum_{k=-M}^M\int_{\O_k\setminus\O_{k-1}}m_{tr^n,r}f(x)^{p(x)}dx\\
&+&\ga^{p_-}\int_{\O_{-M-1}}\ga^{(-M-1)p(x)}dx.
\end{eqnarray*}
Since $m_{tr^n,r}f$ is bounded, $\O_k=\emptyset$ for all $k\le -M_0$, for some $M_0\in {\mathbb N}$.
Therefore, letting $M\to \infty$ in the above estimate yields (\ref{conv}).

Turn now to proving (\ref{ents}). For every $x\in \O_k$, there exists a cube $Q_x$ such that $x\in rQ_x$ and $(f\chi_{Q_x})^*(tr^n|Q_x|)>\ga_k$. Applying Theorem~\ref{cov}
to the family of cubes $\{Q_x\}_{x\in \Omega_k}$ we obtain that there exists $N=N(n,r)$ families of pairwise disjoint cubes $\{Q_{j,k}^i\}, i=1,\dots,N$ such that
$\O_k\subset\cup_{i=1}^N\cup_jQ_{j,k}^i$ and $(f\chi_{Q_{j,k}^i})^*(tr^n|Q_{j,k}^i|)>\ga^k$.

Denote
$$A_{k,j}^i:=\{x\in Q_{j,k}^i:|f(x)|\ge (f\chi_{Q_{j,k}^i})^*(tr^n|Q_{j,k}^i|)\}.$$
Then $|A_{k,j}^i|\ge tr^n|Q_{j,k}^i|\ge \la|Q_{j,k}^i|$.
Therefore, by the ${\mathcal A}_{\infty}$ condition,
\begin{eqnarray*}
&&\ga^k\|\chi_{Q_{j,k}^i}\|_{L^{p(\cdot)}}\le C\ga^k\|\chi_{A_{k,j}^i}\|_{L^{p(\cdot)}}\\
&&\le C\|(f\chi_{Q_{j,k}^i})^*(tr^n|Q_{j,k}^i|)\chi_{A_{k,j}^i}\|_{L^{p(\cdot)}}\le C\|f\chi_{A_{k,j}^i}\|_{L^{p(\cdot)}}\le C.
\end{eqnarray*}
Hence, $\ga^k/C\in (0,1/\|\chi_{Q_{j,k}^i}\|_{L^{p(\cdot)}}]$.

Next,
\begin{eqnarray*}
\int_{\O_k}\ga^{kp(x)}dx&\le& \sum_{i=1}^N\sum_j\int_{Q_{j,k}^i}\ga^{kp(x)}dx\\
&\le& C^{p_+}\sum_{i=1}^N\sum_j\int_{Q_{j,k}^i}(\ga^k/C)^{p(x)}dx.
\end{eqnarray*}

Denote
$$E_{k,j}^i:=A_{k,j}^i\cap rQ_{j,k}^i.$$
Then $|E_{k,j}^i|\ge \big(tr^n-(1-r^n)\big)|Q_{j,k}^i|\ge \eta|Q_{j,k}^i|$.
Next, if $x\in E_{k,j}^i$, then $x\in rQ_{j,k}^i$ and $(f\chi_{Q_{j,k}^i})^*(tr^n|Q_{j,k}^i|)>\ga^k$, which implies $m_{tr^n,r}f(x)>\ga^k$.
From this, $E_{k,j}^i\subset \O_k$.

Applying Theorem \ref{impl} yields
$$
\sum_j\int_{Q_{j,k}^i}(\ga^k/C)^{p(x)}dx\le 2\Big(\sum_{j}\int_{E_{j,k}^i}(\ga^k/C)^{p(x)}dx+(\ga^k/C)^{\d}\chi_{(0,1)}(\ga^k/C)\Big).
$$
Further,
\begin{eqnarray*}
&&\sum_{j}\int_{E_{j,k}^i}(\ga^k/C)^{p(x)}dx\\
&&\le (1/C)^{p_-}\Big(\int_{\cup_jE_{j,k}^i\setminus \O_{k-1}}\ga^{kp(x)}dx+\int_{\O_{k-1}}\ga^{kp(x)}dx\Big)\\
&&\le (1/C)^{p_-}\Big(\int_{\O_k\setminus \O_{k-1}}|f(x)|^{p(x)}dx+\int_{\O_{k-1}}\ga^{kp(x)}dx\Big).
\end{eqnarray*}

Combining this with the previous estimates yields
\begin{eqnarray*}
\int_{\O_k}\ga^{kp(x)}dx&\le& 2NC^{p_+-p_-}\Big(\int_{\O_k\setminus \O_{k-1}}|f(x)|^{p(x)}dx+\int_{\O_{k-1}}\ga^{kp(x)}dx\Big)\\
&+& 2NC^{p_+-\d}\ga^{k\d}\chi_{(0,1)}(\ga^k/C).
\end{eqnarray*}

Finally we take $\ga\in (0,1)$ such that $2NC^{p_+-p_-}\ga^{p_-}=\frac{1}{2}$. Then
\begin{eqnarray*}
\int_{\O_k}\ga^{kp(x)}dx&\le& 2NC^{p_+-p_-}\int_{\O_k\setminus \O_{k-1}}|f(x)|^{p(x)}dx+\frac{1}{2}\int_{\O_{k-1}}\ga^{(k-1)p(x)}dx\\
&+& 2NC^{p_+-\d}\ga^{k\d}\chi_{(0,1)}(\ga^k/C).
\end{eqnarray*}
From this, since $\int_{{\mathbb R}^n}|f(x)|^{p(x)}dx=1$,
\begin{eqnarray*}
\sum_{k\in {\mathbb Z}}\int_{\O_k}\ga^{kp(x)}dx&\le& 2NC^{p_+-p_-}+\frac{1}{2}\sum_{k\in {\mathbb Z}}\int_{\O_k}\ga^{kp(x)}dx\\
&+&2NC^{p_+-\d}\sum_{k=-k_0}^{\infty}\ga^{k\d},
\end{eqnarray*}
where $k_0\in {\mathbb N}$ depends only on $\ga$ and $C$. Since $\sum_{k\in {\mathbb Z}}\int_{\O_k}\ga^{kp(x)}dx<\infty$, this implies
$$\sum_{k\in {\mathbb Z}}\int_{\O_k}\ga^{kp(x)}dx\le 2\Big(2NC^{p_+-p_-}+2NC^{p_+-\d}\sum_{k=-k_0}^{\infty}\ga^{k\d}\Big),$$
which proves (\ref{ents}), and therefore, the proof is complete.
\end{proof}
\enlargethispage{2cm}

\end{document}